\newtheorem{defi}{Definition}
\newtheorem{teo}{Theorem}
\newtheorem{lem}{Lemma}
\newcommand{\ve}{\varepsilon}
\newcommand{\vp}{\varphi}
\newcommand{\pa}{\partial\,}
\newcommand{\ol}{\overline}
\newcommand{\om}{\omega}
\newcommand{\be}{\begin{equation}}
\newcommand{\ee}{\end{equation}}
\newcommand{\const}{\operatorname{const}}
\newcommand{\ipd}{\stackrel{\normalfont\text{def}}{=}}
\newcommand{\dpsi}{\dot{\psi}_0}
\newcommand{\os}{\overline{\sigma}_{ln}}
\newcommand{\ts}{\tilde{\sigma}_{ln}}
\begin{document}
\allowdisplaybreaks

\title{Interaction of 3 solitons   for the GKdV-4  equation }

\author{ Georgy~A.~Omel'yanov\thanks{
University of Sonora, Rosales y Encinas s/n, 83000, Hermosillo,
Sonora, Mexico,\ omel@hades.mat.uson.mx} }
\date{}
\maketitle
\begin{abstract}
We describe  an approach  to construct multi-soliton asymptotic solutions  for non-integrable equations.
The general idea is realized in the case of three waves and for  the  KdV-type equation with nonlinearity $u^4$.
A brief review of asymptotic methods as well as results of numerical simulation are included.
\end{abstract}
\emph{Key words}: generalized Korteweg-de Vries equation, soliton,
interaction,   weak asymptotics method, weak solution, non-integrability

\emph{2010 Mathematics Subject Classification}: 35Q53, 35D30

\section{Introduction}

\subsection{Statement of the problem}

 It is well known that an arbitrary number of solitary waves collide for integrable non-linear equations in an enormous manner:
 they pass through  each other almost as  linear waves. The aim of this paper is the consideration:
 can we believe that such type of interaction is conserved (in a sense) for some essentially non-integrable equations?

 Needless to recall that the integrability implies both the possibility to find exact solutions of complicated structures
 and that the equation has some special properties. Conversely, the non-integrability implies that we do not have,
 at least nowadays,  neither explicit solutions, nor special useful properties of the problem.

 We consider the general techniques and averaging method  by
a specific but typical  example of the Generalized Korteweg-de Vries-4 equation with
small dispersion, that is:
\begin{equation}\label{1}
\frac{\pa u}{\pa t}+\frac{\pa u^m}{\pa x}+\varepsilon^2
\frac{\partial^3u}{\pa x^3}= 0, \quad m=4,\quad x \in
\mathbb{R}^1, \quad t
> 0,
\end{equation}
where $\varepsilon<<1$ is a small parameter.

The GKDV family  (\ref{1}) contains integrable
 ($m=2$ and $m=3$, that is the KdV and MKdV equations) and essentially non-integrable ($m\geq4$)
 equations. The last means that   there is not  any method at present to construct exact
 solutions  of the Cauchy problem with more or less
general initial data.

More in detail, there is known that (\ref{1}) for  $m\geq 6$ is unstable, whereas the
case  $m=5$ is conditionally stable (unstable for solitons) \cite{Bona1, Merle}.
As for $m=4$, this case is stable and there are known some exact particular
solutions including  the solitary wave:
\begin{equation}  u=A\om\left(\beta
\frac{x-Vt}{\varepsilon}\right),\quad\om(\eta)=c\cosh^{-2/3}(\eta),\quad A =
\frac{1}{\gamma}\beta^{2/3},\label{2}
\end{equation}
where $\beta>0$ is an arbitrary number, $c$ is such that
\begin{equation}\label{4}
\int_{-\infty}^\infty \omega (\eta)d\eta=1,
\end{equation}
and
\begin{equation}\label{3}  \quad V =
\frac{a_4}{\gamma^3}\beta^2.
\end{equation}
Here and in what follows we use the notation
\begin{equation}\label{3a}
a_k=\int_{-\infty}^\infty\omega^k(\eta)\,d\eta,\quad k\geq1,\quad
a'_2=\int_{-\infty}^\infty\Big(\frac{d\omega}{d\eta}\Big)^2\,d\eta
\end{equation}
 and the identities
\begin{equation}\label{3b}
\gamma=\Big(\frac37\frac{a_2a_4}{a'_2}\Big)^{1/3},\quad 2a_2a_4=\frac75a_5.
\end{equation}

In  view of the general wave propagation theory,  there appear questions both about
the stability of the solitary wave solution (\ref{2}) with respect
to small perturbations of the equation, and about the character of
the solitary wave collision.

\subsection{Prehistory: single-phase asymptotic solutions}
 The non-integrability implies the use of asymptotic approaches. We consider waves
of arbitrary amplitude (of the value $O(1)$), treating the dispersion $\ve$ as a small parameter.
Thus, there appear "fast" $x/\ve$, $t/\ve$ and "slow" $x$, $t$  variables. Of course, it is possible
to rescale $\eta=x/\ve$, $\tau=t/\ve$ and pass to  "fast" $\eta$, $\tau$ and "slow" $\ve\eta$, $\ve\tau$ variables.
However we prefer the first version.

The modern asymptotic technique, which is based on ideas  by Poincare,  van der Pol,
  Krylov-Bogoliubov and others, has been created  firstly by G.E. Kuzmak (ODE, 1959, \cite{Kuzmak})
  and G.B. Whitham, (PDE, 1965, \cite{Whitham1, Whitham2}, see also \cite{Whitham3}) for rapidly
  oscillating asymptotic solutions of non-linear equations.
The famous Whitham method deals with
a Lagrangian formulation and allows to find slowly  varying amplitude and wave
number of non-uniform wave trains. This approach determined  the development of the non-linear
perturbation theory in 1970s. At the same time, the passage from the original equation to the Lagrangian
 seemed to be artificial. For this reason J.C. Luke (1966, \cite{Luke})
created a version of the Whitham method, which allows to construct asymptotic solutions with arbitrary
precision appealing directly to the original equation. More in detail, for the equation $L(u,\ve u_t, \ve u_x, \dots)=0$ we write the ansatz
\begin{equation}\label{5}
u=Y_0(\tau,t,x)+\ve Y_1(\tau,t,x)+\dots,
\end{equation}
where $\tau=S(x,t)/\ve$, $Y_k(\tau+T,t,x)=Y_k(\tau,t,x)$, $T=\const$, and $S(x,t)$, $Y_k(\tau,t,x)$
are arbitrary functions from $ C^\infty$. Since
$$\ve\pa_tY_k\big(S(x,t)/\ve,t,x\big)=\{S_t\pa_\tau Y_k(\tau,t,x)+\ve\pa_tY_k(\tau,t,x)\}|_{\tau=S/\ve},$$
we obtain the chain of  ordinary (with respect to $\tau$) equations, the first of them is non-linear,
\begin{equation}\label{5a}
L\big(Y_0(\tau,t,x),S_t\pa_\tau Y_0(\tau,t,x), S_x\pa_\tau Y_0(\tau,t,x), \dots\big)=0,
\end{equation}
and others are non-homogenous linearization:
\begin{align}
L'\big(Y_0(\tau,t,x),S_t\pa_\tau ,& S_x\pa_\tau , \dots\big)Y_k(\tau,t,x)\notag\\
&=F_k\big(Y_0(\tau,t,x),\dots,Y_{k-1}(\tau,t,x)\big),\quad k\geq 1.\label{5b}
\end{align}
It is assumed that (\ref{5a}) has a $T$-periodic solution. Then there appear the orthogonality conditions
\begin{equation}\label{5c}
\int_0^TF_k\big(Y_0(\tau,t,x),\dots,Y_{k-1}(\tau,t,x)\big)Z_id\tau=0,\quad i=1,\dots,l,
\end{equation}
which guarantee the solvability of  (\ref{5b}) in the space of $T$-periodic  smooth bounded functions.
Here $\{Z_i, i=1,\dots,l\}$ is the kernel of the operator adjoint to $L'$. Moreover, (\ref{5c}) allow to
define the phase $S(x,t)$ and all the "constants" of integration of the equations (\ref{5a}), (\ref{5b}).

It seemed that the same procedure can be used to construct a perturbed soliton-type solution (with trivial alterations).
However, it is not true, and a mechanical repetition of the Whitham construction leads to some "paradoxes"
and senseless solutions (see, for example, \cite{Scott}, pp. 303 - 306). The situation has been improved by
V. Maslov and G. Omel'yanov (1981, \cite{MasOm1}, see also \cite{MasOm2}). A little bit later a similar construction
has been developed by I. Molotkov and S. Vakulenko (see e.g. \cite{MolVak}). To illustrate the modification \cite{MasOm1} let us consider
 the perturbed GKdV-4 equation (\ref{1}),
\begin{equation}\label{5d}
\frac{\pa u}{\pa t}+\frac{\pa u^4}{\pa x}+\varepsilon^2
\frac{\partial^3u}{\pa x^3}= R,
\end{equation}
where $R=R(x,t,u,\ve u_x, \ve^2 u_{xx},\dots)$ is  "small" in our scaling and $R|_{u=0}=0$.

To find a self-similar soliton-type asymptotics
 we restrict the soliton part of the solution on the zero-level set of the
 phase $S(x,t)=x-\vp(t)+O((x-\vp(t))^2)$. This allows to avoid the appearance of
 some nonuniqueness effects (see {\cite{MasOm2}, pp. 24 - 26). Next we take into account that a smooth small "tail" can
 appear after the soliton.
 Therefore, instead of  (\ref{5}) we write the ansatz in the form:
\begin{equation}\label{5e}
u=Y_0(\tau,t)+\ve Y_1(\tau,t,x)+\dots,
\end{equation}
where $\tau=\big(x-\vp(t)\big)/\ve$, $Y_k$ are smooth bounded function such that $Y_0(\tau,t,x)$ tends to 0 as
$\tau\to\pm\infty$,  $Y_k(\tau,t,x)\to0$ as $\tau\to+\infty$,
and $Y_k(\tau,t,x)\to Y_k^-(x,t)$ as $\tau\to-\infty$ for $k\geq1$, and $\vp$  belongs to  $ C^\infty$.

Similar to (\ref{5a}), substituting (\ref{5e}) into (\ref{5d}) we obtain  the nonlinear model equation
\begin{equation}\label{6}
-\vp_t\frac{d Y_0}{d\tau}+\frac{d Y_0^4}{d\tau}+
\frac{d^3 Y_0}{d\tau^3}= 0
\end{equation}
 and define the shape of
the leading term in (\ref{5e}),
\begin{equation}\label{10}
Y_0=A(t)\om\big(\beta\tau+\vp_1(t)\big),\quad  \beta^{2/3}(t)=\gamma A(t),
\end{equation}
 as well as the similar
to (\ref{3}) relation
\begin{equation}\label{11}
\frac{d\vp}{dt} = a_4 A^3(t).
\end{equation}
 Here $\vp_1(t)$ is a "constant" of integration. Next to find the deficient relation between $\vp$ and $A$ we consider the first
 correction $Y_1$ freezed on the soliton front $x=\vp(t)$. Denoting $\check{Y}_1(\tau,t)=Y_1(\tau,t,x)|_{x=\vp(t)}$, we pass to the equation:
\begin{equation}\label{7}
\frac{d }{d\tau}\Big\{-\vp_t\check{Y}_1+4 Y_0^3\check{Y}_1+
\frac{d^2 \check{Y}_1}{d\tau^2}\Big\}= R\big(\vp,t,Y_0,Y_{0\tau},Y_{0\tau\tau},\dots\big)-Y_{0\,t}.
\end{equation}
Respectively, to guarantee the existence of the desired correction
$\check{Y}_1$, we obtain the following conditions:
\begin{align}
 &\frac{d}{dt}\int_{-\infty}^\infty Y_0^2d\tau =2\int_{-\infty}^\infty Y_0R\big(\vp,t,Y_0,Y_{0\tau},Y_{0\tau\tau},\dots\big)d\tau,\label{8}\\
 &\vp_t\check{Y}_1|_{\tau\to-\infty}=\int_{-\infty}^\infty \Big\{R\big(\vp,t,Y_0,Y_{0\tau},Y_{0\tau\tau},\dots\big)-Y_{0\,t}\Big\}d\tau.\label{9}
\end{align}
Calculating the integrals  in (\ref{8}), we complete  (\ref{11}) by the equation
\begin{equation}\label{11a}
a_2\frac{d }{dt}\frac{A^2}{\beta}=
2\frac{A}{\beta}\mathfrak{R},
\ee
where
\be
\mathfrak{R}=\int_{-\infty}^\infty \om(\eta)R\big(\vp,t,A\om(\eta),A\beta\om(\eta)_{\eta},A\beta^2\om(\eta)_{\eta\eta},\dots\big)d\eta.
\label{11aa}
\end{equation}
This allows us to determine the phase and amplitude dynamics. The equations (\ref{11}), (\ref{11a}) have
been called "Hugoniot-type conditions" \cite{MasOm1} since they do not depend on $\ve$, whereas  the solitary wave (\ref{10}) $Y_0\big((x-\vp(t))/\ve,t\big)$
disappears (in $\mathcal{D}'$ sense) as $\ve\to0$. Let us recall that the Rankine-Huginiot conditions remain the same both for parabolic regularization of shock waves,
and for the limiting non-smooth solutions.

Furthermore, returning to the asymptotic construction, we note that (\ref{9}) implies the equality
\begin{equation}\label{11b}
a_4A^3\check{Y}_1^-=\frac{1}{\beta}
 \int_{-\infty}^\infty R\big(\vp,t,A\om(\eta),A\beta\om(\eta)_{\eta},A\beta^2\om(\eta)_{\eta\eta},\dots\big)d\eta
 -\frac{d}{dt}\frac A\beta,
\end{equation}
where $\check{Y}_1^-=\check{Y}_1|_{\tau\to-\infty}$. Now we integrate the equation  (\ref{7}) and
find the structure of the first freezed correction
\begin{equation}\label{14}
\check{Y}_1(\tau,t)=\check{Y}_1^-(t)\chi(\tau,t)+Z_1(\tau,t)+c_1(t)Y_{0\tau}'(\tau,t),
\end{equation}
where $\chi$ and $Z_1$ are some fixed functions such that
\begin{align}  &Z_1\to0 \quad \text{as}\quad \tau\to\pm\infty,\notag\\
&\chi\to0 \quad \text{as}\quad \tau\to+\infty,\quad \chi\to 1
\quad \text{as}\quad \tau\to-\infty,\notag
\end{align}
and $c_1$ is an arbitrary "constant" of integration.

The next step of the construction is the extension of $\check{Y}_1(\tau,t)$ to $Y_1(\tau,t,x)$ in the following manner:
\begin{equation}\label{14a}
Y_1(\tau,t,x)=u_1^-(t,x)\chi(\tau,t)+Z_1(\tau,t)+c_1(t)Y_{0\tau}'(\tau,t),
\end{equation}
where $u_1^-$ is a smooth function such that
\begin{align}  &\frac{\pa u_1^-}{\pa t}=u_1^-R'_u(x,t,0,\dots),\quad x<\vp(t),\quad t>0,\label{15}\\
&u_1^-|_{x=\vp(t)}=\check{Y}_1^-,\quad t>0.\label{16}
\end{align}
Continuing the procedure we can easily construct the one-phase
self-similar asymptotic solution with arbitrary precision.

Let us note finally that self-similarity implies the special choice of the initial data. In particular,
the initial function $Y_1(\tau,0,x)$
should be of the special form (\ref{14a}) with arbitrary $c_1(0)$
and arbitrary $u_1^-(x,0)$ under the condition
\begin{equation}\label{17}
u_1^-(x,0)|_{x=\vp(0)}=\check{Y}_1^-(0).
\end{equation}
If it is violated and, for example, $u|_{t=0}=A(0)\om\big(\beta
(x-\vp(0))/\varepsilon\big)$, then the perturbed soliton generates a rapidly oscillating tail of the
amplitude $o(1)$ (the so called "radiation") instead of the smooth tail $\ve u_1^-(x,t)$
(see \cite{Kal} for the perturbed KdV equation). However,
$\ve u_1^-(x,t)$ describes sufficiently well the tendency of the radiation amplitude behavior (see e.g. \cite{OmVG}).

\subsection{History: two-phase asymptotic solution}

Concerning the solitary wave collision, this problem is much more
complicated. Indeed, to describe the interaction of two waves of
the form (\ref{2}), we should look for the asymptotics as a
two-phase function,
\begin{equation}\label{18}
u=W\Big(\frac{x-\vp_1}{\ve},\frac{x-\vp_2}{\ve},t\Big)+o(1),
\end{equation}
where $W(\tau_1,\tau_2,t)$ has properties similar to the
two-soliton solution of the KdV equation. However, to construct
$W$ we obtain a non-linear PDE, which is, in fact, equivalent to
the original GKdV-4 equation  (\ref{1}). So, the existence of such
asymptotics remains unknown. The same is true for any essentially
non-integrable equation. Respectively, there is not any
possibility to construct  a classical
asymptotic solution (that is, with the remainder in the
$C$-sense).

At the same time it is easy to note that the solitary wave
solutions (soliton or kink type) tend to distributions as
$\ve\to0$. This allows to treat the equation in the weak sense and, respectively,
look for singularities instead of regular functions. Obviously, non-integrability
implies that we cannot find neither classical nor weak exact solutions.
However, we can construct an asymptotic weak solution considering the smallness of  the remainder in the weak sense.

Originally, such idea had been suggested by V. Danilov and V. Shelkovich
for shock wave type solutions (1997, \cite{DanShel1}), and after that it has
been developed and adapted  for
many other problems (V. Danilov, G. Omel'yanov, V. Shelkovich, D. Mitrovic and others,
\cite{DanShel2} - \cite{Xiu} and references therein). We called this approach the "weak asymptotics method".

For the special case of soliton-type solutions we note now that  they have the value $O(\ve)$
in the weak sense. Thus, the remainder for the leading term of the asymptotic solution should be $O(\ve^2)$
in the weak sense. However,  the GKdV equations (\ref{1}) degenerate to a first-order PDE in $\mathcal{D}'$ for this precision.
The same fact has been noted by Danilov, Omel'yanov, and
 Radkevich (1997, \cite{DanOmRad}) by the consideration of a free boundary
problem. There has been suggested also a way about how to overcome this obstacle.  Applying these ideas  to the equation (\ref{1}), we pass to
the following definition of the weak asymptotic solution \cite{DanOm}:

\begin{defi}\label{def2.1}
A sequence $u(t, x, \varepsilon )$, belonging to
$\mathcal{C}^\infty (0, T; \mathcal{C}^\infty (\mathbb{R}_x^1))$
for $\varepsilon =\const> 0$ and belonging to $\mathcal{C} (0, T;
\mathcal{D}' (\mathbb{R}_x^1))$ uniformly in $\varepsilon\geq0$, is
called a weak asymptotic mod $ O_{\mathcal{D}'}(\varepsilon^2)$
solution of (\ref{1}) if the relations
\begin{equation}\label{19}
\frac{d}{dt} \int_{-\infty}^\infty u\psi dx -
\int_{-\infty}^\infty  u^4 \frac{\pa \psi}{\pa x} dx = O
(\varepsilon^2),
\end{equation}
\begin{equation}\label{20}
\frac{d}{dt} \int_{-\infty}^\infty u^2 \psi dx
-\frac{8}{5}\int_{-\infty}^\infty u^{5}\frac{\pa \psi}{\pa x}dx+
3\int_{-\infty}^\infty \left( \varepsilon\frac{\pa u}{\pa x}
\right)^2 \frac{\pa \psi}{\pa x}dx = O (\varepsilon^2)
\end{equation}
hold uniformly in $t$ for any test function $\psi = \psi(x) \in
\mathcal{D} (\mathbb{R}^1)$.
\end{defi}
Here the right-hand sides are $\mathcal{C}^\infty$-functions for
$\varepsilon=\const > 0$ and  piecewise continuous functions
uniformly in $\varepsilon \geq 0$. The estimates are understood in
the $\mathcal{C}(0, T)$ sense:
$$g(t, \varepsilon) = O (\varepsilon^k) \leftrightarrow \max_{t\in[0,T]} |g(t,
\varepsilon)| \leq c\varepsilon^k.$$

\begin{defi} \label{def2.2}
A function  $v(t, x, \varepsilon)$ is said to be of the value $
O_{\mathcal{D}'}(\varepsilon^k)$ if the relation
$$ \int_{-\infty}^\infty v(t, x, \varepsilon )\psi(x) dx = O(\varepsilon^k) $$
holds uniformly in $t$ for any test function $\psi \in \mathcal{D}
(\mathbb{R}_x^1)$.
\end{defi}
The sense of the relation (\ref{19}) is obvious: it is the
adaptation of the standard $\mathcal{D}'$-definition to asymptotic
mod $ O_{\mathcal{D}'}(\varepsilon^2)$ solution which  belongs to
$\mathcal{C} (0, T; \mathcal{D}' (\mathbb{R}_x^1))$. Next we note again
 that (\ref{19}) cannot be a unique satisfactory
condition since here has been lost the difference between the
GKdV-4 equation and the limiting first order equation (with
$\ve=0$). To involve the dispersion term into the consideration, we supplement (\ref{19})
by the additional condition (\ref{20}). It can be treated as a version of (\ref{19}) but
 for special test functions $u\,\psi(x)$, $\psi \in
\mathcal{D} (\mathbb{R}_x^1)$, which vary rapidly together with  the
solution.

It is important also that (\ref{20}) duplicates the orthogonality
condition which appears for single-phase asymptotics. Indeed, the
adaptation of the Definition 1 to the perturbed equation
(\ref{5d}) implies the transformation of (\ref{20}) to the
following form:
\begin{align}
\frac{d}{dt} \int_{-\infty}^\infty u^2 \psi dx
-\frac{8}{5}\int_{-\infty}^\infty u^{5}\frac{\pa \psi}{\pa x}dx&+
3\int_{-\infty}^\infty \left( \varepsilon\frac{\pa u}{\pa x}
\right)^2 \frac{\pa \psi}{\pa x}dx
\label{21}\\
&-2\int_{-\infty}^\infty u R \psi dx= O
(\varepsilon^2).\notag
\end{align}
Next for $u$ of the form (\ref{5e}), (\ref{10}) we calculate the weak expansion:
\begin{align}
\int_{-\infty}^\infty u^k(x,t)\psi(x)dx=\ve\frac{A^k}{\beta}\int_{-\infty}^\infty \om^k(\eta)&\psi(\vp+\ve\frac\eta\beta)dx
+O(\ve^2)\notag\\&=\ve a_k\frac{A^k}{\beta}\psi(\vp)
+O(\ve^2), \notag
\end{align}
where $k\geq1$. Thus
\be
u^k=\ve a_k\frac{A^k}{\beta}\delta(x-\vp)+O_{\mathcal{D}'}(\ve^2).\label{21b}
\ee
In the same manner we obtain
\begin{align}
&(\ve u_x)^2=\ve a'_2\beta A^2\delta(x-\vp)+O_{\mathcal{D}'}(\ve^2),\label{21a}\\
&u R(x,t,u,\ve u_x, \ve^2 u_{xx},\dots)=\ve\frac A\beta \mathfrak{R}\delta(x-\vp)+O_{\mathcal{D}'}(\ve^2),\label{21c}
\end{align}
where $\mathfrak{R}$ has been defined in (\ref{11aa}).
Substitution of (\ref{21b}) - (\ref{21c}) into (\ref{21}) implies the relation
\begin{align}
\ve \Big\{-a_2\frac{A^2}{\beta}\frac{d\vp}{dt}+a_5\frac85\frac{A^5}{\beta}-3a'_2\beta A^2\Big\}&\delta'(x-\vp)
\notag\\+\ve \Big\{a_2\frac{d}{dt}\frac{A^2}{\beta}-2\frac A\beta \mathfrak{R}\Big\}&\delta(x-\vp)=
O_{\mathcal{D}'}(\varepsilon^2).\label{21d}
\end{align}
Since $\delta(x-\vp)$ and $\delta'(x-\vp)$ are linearly independent, their coefficients in
 (\ref{21d}) should be equal to zero. Taking into account the identities (\ref{3}) we obtain again the equations
(\ref{11}), (\ref{11a})  for the one-phase asymptotics (\ref{5e}).

Let us revert to the two-wave interaction. Following \cite{DanOm} (see also \cite{DanOmShel}), we present the ansatz
 as the sum of two distorted solitons (\ref{2}), that is:
\begin{equation}\label{22}
u=\sum_{i=1}^2G_i\om\left(\beta_i
\frac{x-\vp_i}{\varepsilon}\right),
\end{equation}
where
\be G_i=A_i+S_i(\tau),\quad\vp_i=\vp_{i0}(t)+\ve\vp_{i1}(\tau),\quad\tau=\beta_1\big(\vp_{20}(t)-\vp_{10}(t)\big)/\ve,\label{22a}\ee
$A_i$  are the original amplitudes and $\vp_{i0}=V_it+x_{i0}$ describe the trajectories of the non-interacting  waves
(\ref{2}), $\beta_i=(\gamma A_i)^{2/3}$. We assume that $A_1<A_2$ and $x_{10}>>x_{20}$, therefore, the trajectories $x=\vp_{10}$ and
$x=\vp_{20}$ intersect at a point $(x^*,t^*)$. Next we define the "fast time"
$\tau$ to characterize
the distance between the trajectories  $\vp_{i0}$ and we assume that $S_i(\tau)$, $\vp_{i1}(\tau)$ are such that
\begin{align}
&S_i\to0 \quad \text{as}\quad \tau\to\pm\infty,\label{23}\\
&\vp_{i1}\to0 \quad \text{as}\quad \tau\to-\infty,\quad
\vp_{i1}\to \vp_{i1}^\infty=\const_i \quad \text{as}\quad
\tau\to+\infty.\label{24}
\end{align}
It is obvious that the existence of the weak asymptotics  (\ref{22}) with the properties (\ref{23}), (\ref{24})
implies that the solitary waves (\ref{2}) interact like the KdV solitons at least in the leading term.

To construct the asymptotics we should calculate again the weak expansions for $u^k$ and $(\ve u_x)^2$.
It is easy to check that
\be
u=\ve \sum_{i=1}^2\frac{G_i}{\beta_i}\delta(x-\vp_i)+O_{\mathcal{D}'}(\ve^3).\label{23aa}
\ee
At the same time
\begin{align}
&\int_{-\infty}^\infty u^2(x,t)\psi(x)dx=\ve\sum_{i=1}^2\frac{G_i^2}{\beta_i}\int_{-\infty}^\infty \om^2(\eta)\psi(\vp_i+\ve\frac\eta\beta_i)dx\label{23a}\\&
+2G_1G_2\int_{-\infty}^\infty \om\left(\beta_1\frac{x-\vp_1}{\ve}\right)\om\left(\beta_2\frac{x-\vp_2}{\ve}\right)\psi(x)dx.\label{23b}
\end{align}
We take into account that the integrand in (\ref{23b}) vanishes exponentially fast as $|\vp_1-\vp_2|$  grows,
thus, the main contribution gives the point $x^*$.  We write
\be
\vp_{i0}=x^*+V_i(t-t^*)=x^*+\ve \frac{V_i}{\beta_1(V_2-V_1)}\tau \quad \text{and} \quad \vp_{i}=x^*+\ve\chi_{i},\label{23c}
\ee
where $\chi_i=V_i\tau/\big(\beta_1(V_2-V_1)\big) + \vp_{i1}.$ Next we transform the integral in (\ref{23b}) to the following form:
\be
\frac\ve{\beta_2}\int_{-\infty}^\infty  \om(\theta_{12}\eta-\sigma_{12})\om(\eta)\psi\big(x^*+\ve\chi_2+\ve\frac\eta\beta_2\big)d\eta,\label{23d}
\ee
where $\theta_{12}=\beta_1/\beta_2$, $\sigma_{12}=\beta_1(\vp_1-\vp_2)/\ve$.
It remains to apply the formula
\be
f(\tau)\delta(x-\vp_i)=f(\tau)\delta(x-x^*)-\ve\chi_if(\tau)\delta'(x-x^*)+O_{\mathcal{D}'}(\ve^2),\label{23e}
\ee
which holds for each $\vp_i$ of the form (\ref{23c}) with slowly increasing $\chi_i$ and for $f(\tau)$ from the Schwartz space.
 Moreover, the second term in (\ref{23e}) is $O_{\mathcal{D}'}(\ve)$. Thus, under the assumptions (\ref{23})
  we can modify (\ref{23aa})-(\ref{23b})
to the final form:
\begin{equation}\label{25}
u=\ve\sum_{i=1}^2\frac{A_i}{\beta_i}\delta(x-\vp_{i})+\ve\sum_{i=1}^2\frac{S_i}{\beta_i}\big\{\delta(x-x^*)-\ve\chi_i\delta'(x-x^*)\big\}
+O_{\mathcal{D}'}(\varepsilon^3),
\end{equation}
\begin{align}
u^2&=\ve a_2\sum_{i=1}^2\frac{A_i^2}{\beta_i}\delta(x-\vp_{i})+\ve
a_2\Big\{
\sum_{i=1}^2\frac{1}{\beta_i}(2A_iS_i+S_i^2)\notag\\
&+2\frac{G_1G_2}{\beta_2}\lambda_{2,1}(\sigma_{12})\Big\}\delta(x-x^*)
+O_{\mathcal{D}'}(\varepsilon^2),\label{26}
\end{align}
where  the convolution $\lambda_{2,1}(\sigma_{12})$ describes the product of
two waves. In view of further applications we present such type of convolutions in the general version:
\begin{equation}\label{27}
\lambda_{m,k}^{(j)}(\sigma_{ln})=\frac{1}{a_m}\int_{-\infty}^\infty
\eta^j\om^{m-k}(\eta_{ln})\om^k(\eta)d\eta,\; \eta_{ln}\ipd\theta_{ln}\eta-\sigma_{ln},\; \theta_{ln}\ipd\frac{\beta_l}{\beta_n},
\end{equation}
where $1\leq k<m$, $m\geq2$,  $j=0$ or $j=1$, and we write $\lambda_{m,k}(\sigma_{ln})\ipd\lambda_{m,k}^{(0)}(\sigma_{ln})$ simplifying the notation.

To  calculate the time-derivative of $u$ with the accuracy $O_{\mathcal{D}'}(\varepsilon^2)$ it is enough to use
the expansion (\ref{25}), the assumptions  (\ref{23}), (\ref{24}), and to apply the formula (\ref{23e}) again. Thus,
\begin{align}
\frac{\pa u}{\pa t}&=\sum_{i=1}^2\frac{\dot{\psi}_0}{\beta_i}\frac{dS_i}{d\tau}\delta(x-x^*)-\ve\sum_{i=1}^2V_i\frac{A_i}{\beta_i}\delta'(x-\vp_{i})\notag\\
&-\ve\dpsi\frac{d}{d\tau}\sum_{i=1}^2\Big\{\frac{A_i}{\beta_i}\vp_{i1}+\chi_iK_{i1}^{(1)}\Big\}\delta'(x-x^*)
+O_{\mathcal{D}'}(\varepsilon^2),\label{25a}
\end{align}
where $\dpsi=\beta_1(V_2-V_1)$.

On the contrary, to find $\pa (u^2)/\pa t$ with the same accuracy we should add to the leading term (\ref{26}) the next correction:
\begin{align}
&-\ve^2 a_2\Bigg\{
\sum_{i=1}^2\frac{\chi_i}{\beta_i}(2A_iS_i+S_i^2)
\notag\\
&+2\frac{G_1G_2}{\beta_2}\Big(\chi_2\lambda_{2,1}(\sigma_{12})+\frac{1}{\beta_2}\lambda_{2,1}^{(1)}(\sigma_{12})\Big)\Bigg\}\delta'(x-x^*)
+O_{\mathcal{D}'}(\varepsilon^3)\label{26a}.
\end{align}
Now we obtain
\begin{align}
\frac{\pa u^2}{\pa t}&=a_2\frac{d}{d\tau}\Big\{2\frac{\dpsi}{\beta_2}G_1G_2\lambda_{2,1}(\sigma_{12})
+\sum_{i=1}^2\frac{\dot{\psi}_0}{\beta_i}\big(2A_iS_i+S_i^2\big)\Big\}\delta(x-x^*)\notag\\
&
-\ve a_2\frac{d}{d\tau}\Bigg\{\dpsi\sum_{i=1}^2\frac{A_i^2}{\beta_i}\vp_{i1}+2\frac{\dpsi}{\beta_2}G_1G_2\big(\chi_2\lambda_{2,1}(\sigma_{12})
+\frac1{\beta_2}\lambda_{2,1}^{(1)}(\sigma_{12})\big)\label{26b}\\
&
+\sum_{i=1}^2\frac{\dpsi}{\beta_i}\chi_i\big(2A_iS_i+S_i^2\big)\Bigg\}\delta'(x-x^*)
-\ve a_2\sum_{i=1}^2V_i\frac{A_i^2}{\beta_i}\delta'(x-\vp_{i})+O_{\mathcal{D}'}(\varepsilon^2).\notag
\end{align}
Calculating weak expansions for the other terms from the left-hand
sides of (\ref{19}), (\ref{20}) and substituting them into
(\ref{19}), (\ref{20}) we obtain linear combinations of
$\delta'(x-\vp_{i})$, $i=1,2$, $\delta(x-x^*)$, and
$\delta'(x-x^*)$. Therefore, we pass to the following system of 8
equations for 8 unknowns:
\begin{align}
&P_{i,j}(A_i,\beta_i,V_i)=0, \quad j=1,2,\quad i=1,2,\label{28}\\
&\frac{d}{d\tau}Q_j(S_1,S_2,\sigma_{12})=0,\quad j=1,2,\label{29}\\
&\frac{d\vp_{j1}}{d\tau}=R_j(S_1,S_2,\sigma_{12}),\quad
j=1,2.\label{30}
\end{align}
The first four algebraic equations (\ref{28}) imply again the  relations
(\ref{3}) between $A_i$, $\beta_i$, and $V_i$. Furthermore,
functional equations (\ref{29}) allow to define $S_i$ with the
property (\ref{23}), whereas an analysis of the ODE (\ref{30})
justifies the existence of the required phase corrections $\vp_{i1}$
with the property (\ref{24}). By analogy with  (\ref{11}), (\ref{11a})
we call (\ref{28})-(\ref{30}) the Hugoniot-type conditions again.
\begin{figure}[ht]
\centering
\includegraphics[height=2in,width=3.5in]{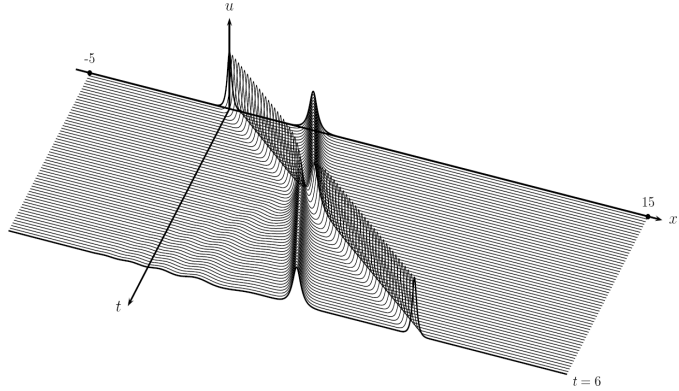}
\caption{Evolution of two solitary waves  for $\ve=0.1$}
\label{Fig1}
\end{figure}
Results of numerical simulations \cite{GarOm2, GarOm3} confirm the traced asymptotic
analysis (see Figure 1).

Finally let us note that the two-wave problem for the equation (\ref{1}) has been considered recently in the framework of another approach
 but for small  amplitudes   \cite{Merle2}.

\subsection{The next problem: three wave interaction}
Further numerical investigation of the GKdV-4 equation showed  that $N$ solitary waves collide
elastically (in the leading term) at least for $N\leq 5$, see \cite{GarOm2, GarOm3} and Figures 2--4.
\begin{figure}[ht]
\centering
\includegraphics[height=2in,width=3.5in]{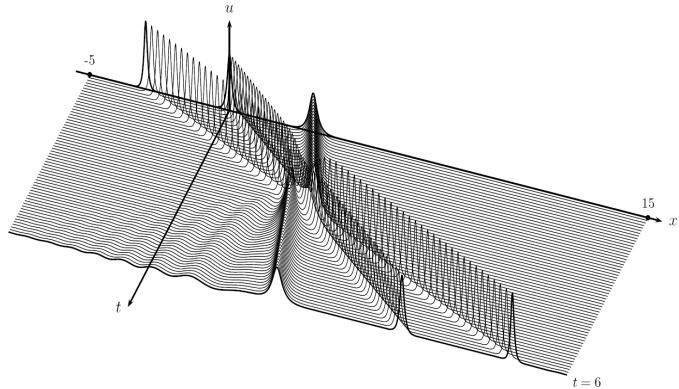}
\caption{Evolution of the soliton triplet for $\ve=0.1$}
\label{Fig2}
\end{figure}
Thus, there appears the problem of describing the interaction
of three (and more) solitary waves. It seemed that it was enough to
repeat the same procedure as above, now for three waves. However,
it is easy to recognize that the corresponding 4 equations
(\ref{29}), (\ref{30}) will contain now 6 free functions.
Obviously, this can not be any adequate description of the
solution.

 Therefore, we should transform the conception of the asymptotic
solution. To do it let us recall    two first
conservation laws for the equation (\ref{1}) (in  the differential form):
\begin{equation}\label{31}
\frac{\pa u}{\pa t}+\frac{\pa }{\pa x}\Big\{u^4+\varepsilon^2
\frac{\partial^2u}{\pa x^2}\Big\}= 0,
\end{equation}
\begin{equation}\label{32}
\frac{\pa u^2}{\pa t}+\frac{\pa }{\pa
x}\Big\{\frac85u^5-3\varepsilon^2 \Big(\frac{\partial u}{\pa
x}\Big)^2+\varepsilon^2 \frac{\partial^2u^2}{\pa x^2}\Big\}= 0.
\end{equation}
Comparing the left-hand sides of (\ref{31}), (\ref{32}) with
(\ref{19}), (\ref{20}) we conclude that Definition 1 calls a
function to be a ''weak asymptotic solution" if it satisfies the
conservation laws (\ref{31}), (\ref{32}) in the sense $
O_{\mathcal{D}'}(\varepsilon^2)$.
\begin{figure}[ht]
\centering
\includegraphics[height=2in,width=3.5in]{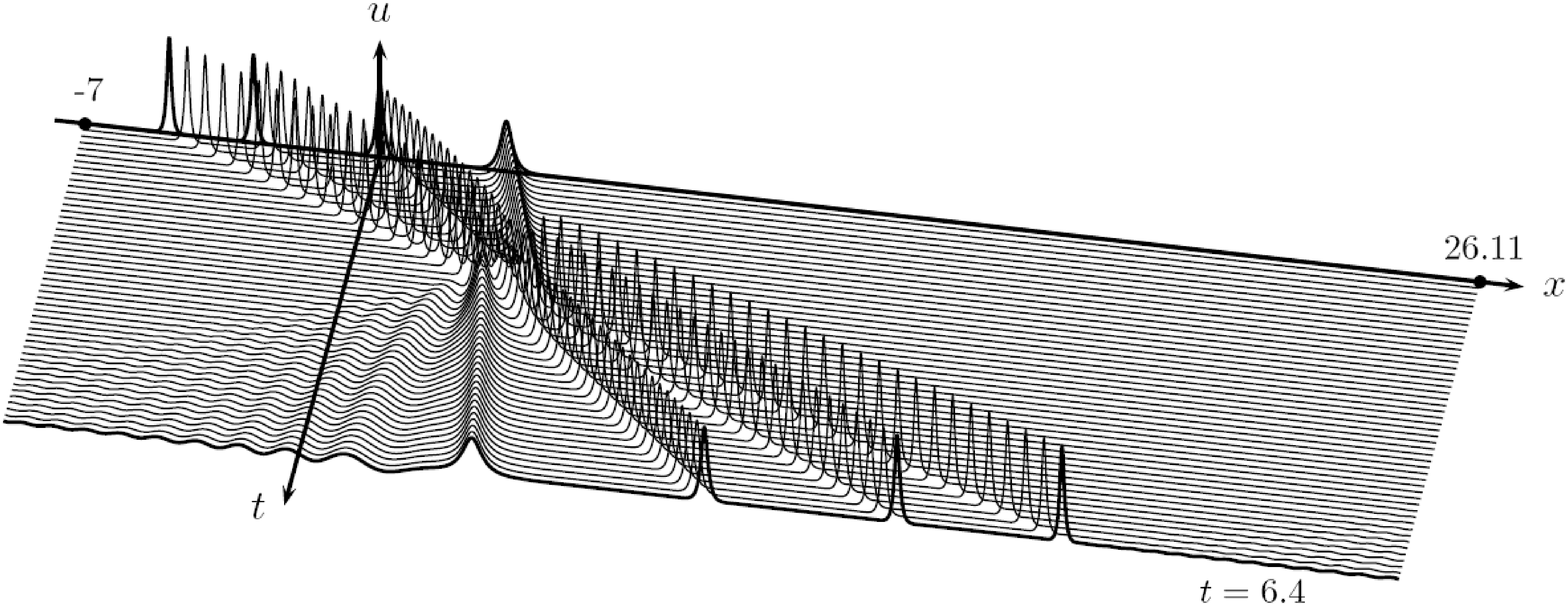}
\caption{Evolution of 4 solitons for $\ve=0.1$}
\end{figure}
Next note that perturbations of (\ref{1}) imply corresponding
perturbations of the conservation laws. For (\ref{5d}), instead of
(\ref{31}), (\ref{32}), we have the relations
\begin{equation}\label{33}
\frac{\pa u}{\pa t}+\frac{\pa }{\pa x}\Big\{u^4+\varepsilon^2
\frac{\partial^2u}{\pa x^2}\Big\}=R,
\end{equation}
\begin{equation}\label{34}
\frac{\pa u^2}{\pa t}+\frac{\pa }{\pa
x}\Big\{\frac85u^5-3\varepsilon^2 \Big(\frac{\partial u}{\pa
x}\Big)^2+\varepsilon^2 \frac{\partial^2u^2}{\pa x^2}\Big\}=
2R u.
\end{equation}

Reverting to the single-phase asymptotic solution (\ref{5e}) one
can easily establish that the orthogonality condition (\ref{8}), and therefore the equation (\ref{11a}),
 is the integral form of
(\ref{34}), calculated for (\ref{5e}) with the accuracy
$O_{\mathcal{D}'}(\varepsilon^2)$. At the same time (\ref{9}), and thus the equality (\ref{11b}),
 is
the integral form of (\ref{33}) $\mod
O_{\mathcal{D}'}(\varepsilon^2)$, calculated for (\ref{5e}), where
$Y_1$ has the form (\ref{14a}) and $u_1^-$ satisfies the equation
(\ref{15}).
\begin{figure}[ht]
\centering
\includegraphics[height=2in,width=5in]{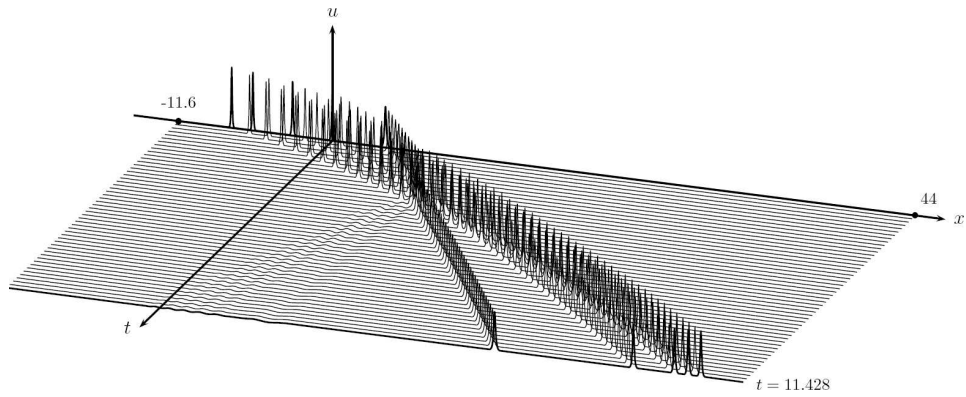}
\caption{Evolution of 5 solitons for $\ve=0.1$}
\end{figure}
Note also that the second "conservation law"
(\ref{34}) has been used in the one-phase situation  to define the principal term of the
asymptotic solution, whereas  (\ref{33}) has been used only to
define  the first correction $Y_1$.

Therefore, we see that to define the principal asymptotics term there has been
used only one conservation law for the single-phase solution, and
two conservation laws for the two-phase solution. So it is
natural to assume that to construct a three-phase asymptotics we
should add to (\ref{31}), (\ref{32}) the third conservation law,
namely
\begin{equation}\label{35}
\frac{\pa }{\pa t}\Big\{\Big(\ve\frac{\pa u}{\pa
x}\Big)^2-\frac25u^5\Big\}-\frac{\pa }{\pa x}\Big\{2\ve^2\frac{\pa
u}{\pa t}\frac{\pa u}{\pa x}+\Big(u^4+\varepsilon^2
\frac{\partial^2 u}{\pa x^2}\Big)^2\Big\}= 0.
\end{equation}

\section{Asymptotic construction}
Let us consider the equation (\ref{1}) with the Cauchy data
\begin{equation}\label{36}
u|_{t=0}=\sum_{i=1}^3A_i\om\left(\beta_i
\frac{x-x_{i0}}{\varepsilon}\right),
\end{equation}
where $A_1<A_2<A_3$, $x_{10}>>x_{20}>>x_{30}$.

The arguments considered in the previous subsection imply the following
\begin{defi}\label{def2.3}
A sequence $u(t, x, \varepsilon )$, belonging to
$\mathcal{C}^\infty (0, T; \mathcal{C}^\infty (\mathbb{R}_x^1))$
for $\varepsilon =\const> 0$ and belonging to $\mathcal{C} (0, T;
\mathcal{D}' (\mathbb{R}_x^1))$ uniformly in $\varepsilon$, is
called the weak asymptotic mod $ O_{\mathcal{D}'}(\varepsilon^2)$
solution of the problem (\ref{1}), (\ref{36}) if the relations
(\ref{19}), (\ref{20}), and
\begin{align}
&\frac{\pa }{\pa t}\Big\{\int_{-\infty}^\infty\Big(\ve\frac{\pa
u}{\pa x}\Big)^2\psi dx-\frac25\int_{-\infty}^\infty u^5\psi dx
\Big\}\label{37}\\
&+\int_{-\infty}^\infty\Big\{2\ve^2\frac{\pa u}{\pa t}\frac{\pa
u}{\pa x}+u^8+\Big(\varepsilon^2 \frac{\partial^2 u}{\pa
x^2}\Big)^2+2u^4\varepsilon^2 \frac{\partial^2 u}{\pa
x^2}\Big\}\frac{\partial \psi}{\pa x}dx= O
(\varepsilon^2)\notag
\end{align}
hold uniformly in $t$ for any test function $\psi = \psi(x) \in
\mathcal{D} (\mathbb{R}^1)$.
\end{defi}

To construct the asymptotic solution we write the ansatz in the
form similar to (\ref{22}), namely
\begin{equation}\label{38}
u=\sum_{i=1}^3G_i\om\left(\beta_i
\frac{x-\vp_i}{\varepsilon}\right),
\end{equation}
where the same notation and hypothesis (\ref{22a})-(\ref{24}) are assumed with obvious corrections:
the "fast time" is defined now using the distance between the first and third trajectories,
\be\tau=\beta_1\big(\vp_{30}(t)-\vp_{10}(t)\big)/\ve,\label{38a}\ee
and we  suppose the intersection of  all trajectories $x=\vp_{i0}(t)$, $i=1,2,3$, at
the same point $(x^*,t^*)$.

The technic of our approach has been explained in Subsection 1.3. So we clarify here some new detail only using as an example $u^m$.
All others explicit formulas for the asymptotic expansions are presented  in the Appendix.
\begin{lem} \label{lem1}
Let the assumptions (\ref{23}), (\ref{24}) for $u$ of the form
(\ref{38})  be satisfied. Then the following asymptotic expansions hold:
\begin{align}
&u^m=\ve a_m    \Big\{\sum_{i=1}^3K_{i0}^{(m)}\delta(x-\vp_{i})+\mathfrak{R}_m\delta(x-x^*)\Big\}
+O_{\mathcal{D}'}(\varepsilon^2),\label{39}\\
&\frac{\pa u^m}{\pa t}= a_m \dpsi\frac{d\mathfrak{R}_m}{d\tau}\delta(x-x^*)-\ve a_m\sum_{i=1}^3V_iK_{i0}^{(m)}\delta'(x-\vp_{i}) \notag\\
&-\ve a_m\dpsi\frac{d}{d\tau}\Big\{\sum_{i=1}^3K_{i0}^{(m)}\vp_{i1}+\mathfrak{R}_m^{(1)}\Big\}\delta'(x-x^*)
+O_{\mathcal{D}'}(\varepsilon^2),\label{39a}
\end{align}
where $m\geq1$,
\begin{align}
&\dpsi=\beta_1(V_3-V_1),\; K_{i}^{(m)}=\frac{G_i^m}{\beta_i}, \; K_{i0}^{(m)}=\frac{A_i^m}{\beta_i},\; K_{i1}^{(m)}=K_{i}^{(m)}-K_{i0}^{(m)},\label{40}\\
&\mathfrak{R}_m=\sum_{i=1}^3K_{i1}^{(m)}+\sum_{l,n}R_{m,ln}+R_{m,123},\label{41}\\
&\mathfrak{R}_m^{(1)}=\sum_{i=1}^3\chi_iK_{i1}^{(m)}+\sum_{l,n}(\chi_nR_{m,ln}+C_{m,ln})+\chi_3R_{m,123}+C_{m,123},\label{42}\\
&R_{m,ln}=\beta_l\sum_{k=1}^{m-1}C_m^kK_l^{(m-k)}K_n^{(k)}\lambda_{m,k}(\sigma_{ln}),\label{43}\\
&R_{m,123}=\beta_1\beta_2\sum_{j=2}^{m-1}\sum_{k=1}^{j-1}C_m^jC_j^kK_1^{(m-j)}K_2^{(j-k)}K_3^{(k)}\lambda_{m,123}^{(0),(j,k)},\label{44}\\
&C_{m,ln}=\sum_{k=1}^{m-1}C_m^k\theta_{ln}K_l^{(m-k)}K_n^{(k)}\lambda_{m,k}^{(1)}(\sigma_{ln}),\label{45}\\
&C_{m,123}=\beta_1\theta_{23}\sum_{j=2}^{m-1}\sum_{k=1}^{j-1}C_m^jC_j^kK_1^{(m-j)}K_2^{(j-k)}K_3^{(k)}\lambda_{m,123}^{(1),(j,k)},\label{46}
\end{align}
$C_m^k$ are the binomial coefficients,  the notation (\ref{27}) has been used,  and
\be
\label{47}
\sum_{l,n}f_{ln}\ipd f_{12}+f_{13}+f_{23},\quad \sigma_{ln}=\beta_l\frac{\vp_l-\vp_n}{\ve},\quad \chi_i=\frac{V_i}{\dpsi}\tau+\vp_{i1}.
\ee
Furthermore,
\be
\lambda_{m,123}^{(i),(j,k)}=\frac{1}{a_m}\int_{-\infty}^\infty\eta^i
\om^{m-j}(\eta_{13})\om^{j-k}(\eta_{23})\om^{k}(\eta)d\eta,\quad i=0 \quad\text{or}\quad i=1.\label{51}
\ee
\end{lem}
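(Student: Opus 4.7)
The plan is to expand $u^m$ by the multinomial theorem, group the resulting products according to how many of the three distorted solitons they involve (one, two, or three), reduce each class of integrals $\int u^m\psi\,dx$ to combinations of $\delta(x-\vp_i)$ and $\delta(x-x^*)$ with their derivatives by the change of variable $\eta=\beta_n(x-\vp_n)/\ve$ plus Taylor expansion of $\psi$, and finally transfer the pieces concentrated near $\tau=0$ from $\delta(x-\vp_i)$ to $\delta(x-x^*)$ by the localisation identity (\ref{23e}).

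I would begin with the single-wave contribution $G_i^m\om^m(\beta_i(x-\vp_i)/\ve)$ and split $G_i^m=A_i^m+(G_i^m-A_i^m)$: after the substitution $\eta=\beta_i(x-\vp_i)/\ve$, the constant part yields $\ve a_m K_{i0}^{(m)}\delta(x-\vp_i)$, while the distortion part, being a Schwartz function of $\tau$ by (\ref{23}), is moved to $x^*$ by (\ref{23e}) with coefficient $\ve a_m K_{i1}^{(m)}$. For a two-wave product indexed by an unordered pair $\{l,n\}$ and power $k$, the substitution $\eta=\beta_n(x-\vp_n)/\ve$ turns $\beta_l(x-\vp_l)/\ve$ into $\theta_{ln}\eta-\sigma_{ln}$, producing the convolution (\ref{27}): the leading term is $\ve a_m \binom{m}{k}\beta_l K_l^{(m-k)}K_n^{(k)}\lambda_{m,k}(\sigma_{ln})\psi(\vp_n)$, and the next order of the Taylor expansion of $\psi$ around $\vp_n$ supplies $\lambda_{m,k}^{(1)}(\sigma_{ln})$. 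Summing over $k$ (which covers both orderings of the pair) and over $\{12\},\{13\},\{23\}$ gives $R_{m,ln}$; transferring the resulting $\delta(x-\vp_n)$ to $x^*$ via (\ref{23e}) contributes $\chi_n R_{m,ln}+C_{m,ln}$ to the next-order part of (\ref{42}). The three-wave product, centred at $\vp_3$, is handled identically and delivers $R_{m,123}$ and $\chi_3 R_{m,123}+C_{m,123}$ via (\ref{51}). Collecting all contributions yields (\ref{39}) as well as the refined expansion, accurate to $O_{\mathcal{D}'}(\ve^3)$, in which the coefficient of $\delta'(x-x^*)$ equals $-\ve^2 a_m\mathfrak{R}_m^{(1)}$.

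For (\ref{39a}) I would differentiate this refined expansion in $t$. The terms supported at $x=\vp_i$ give $-\ve a_m K_{i0}^{(m)}\dot\vp_i\delta'(x-\vp_i)$ with $\dot\vp_i=V_i+\dpsi\,\vp_{i1}'(\tau)$; since $\vp_{i1}'\to0$ at $\pm\infty$, the second piece can be rewritten on $\delta'(x-x^*)$ modulo $O_{\mathcal{D}'}(\ve^2)$, and writing $K_{i0}^{(m)}\vp_{i1}'=(d/d\tau)(K_{i0}^{(m)}\vp_{i1})$ merges it with the other $\delta'(x-x^*)$ contributions. The terms supported at $x^*$ depend on $t$ only through $\tau$, and $d\tau/dt=\dpsi/\ve$ converts the $\ve$ prefactor in $\ve a_m\mathfrak{R}_m\delta(x-x^*)$ into $a_m\dpsi\,(d\mathfrak{R}_m/d\tau)\,\delta(x-x^*)$ and the $\ve^2$ prefactor in $-\ve^2 a_m\mathfrak{R}_m^{(1)}\delta'(x-x^*)$ into $-\ve a_m\dpsi\,(d\mathfrak{R}_m^{(1)}/d\tau)\,\delta'(x-x^*)$, while the $\partial_t$ of $\delta(x-x^*)$ itself vanishes. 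Adding the four contributions reproduces (\ref{39a}).

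The main obstacle I expect is purely combinatorial bookkeeping: verifying that the multinomial factors, the factors of $\beta_l$ and $\theta_{ln}$ generated by the different changes of variable, and the two successive applications of (\ref{23e}) (once to transfer each $\delta(x-\vp_i)$ arising from distortion or interaction terms to $x^*$, once to absorb $\vp_{i1}'$ in the time derivative) combine exactly into the closed-form expressions (\ref{41})-(\ref{46}). The analytic input is benign: the Schwartz character in $\tau$ of $K_{i1}^{(m)}$, $R_{m,ln}$, $R_{m,123}$ and their $(1)$-counterparts follows from (\ref{23}), (\ref{24}) and the exponential decay of $\om$, which is precisely what (\ref{23e}) requires.
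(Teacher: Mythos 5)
Your proposal is correct and follows essentially the same route as the paper: the multinomial separation of $u^m$ into one-, two-, and three-phase groups, the changes of variable $x=\vp_i+\ve\eta/\beta_i$, $x=\vp_n+\ve\eta/\beta_n$, $x=\vp_3+\ve\eta/\beta_3$ producing the convolutions (\ref{27}) and (\ref{51}) together with the Taylor expansion of the test function, and the transfer to $\delta(x-x^*)$ via (\ref{23e}); the time derivative is likewise obtained exactly as in the two-soliton computation of Subsection 1.3, using $d\tau/dt=\dpsi/\ve$. Your bookkeeping of the factors $\beta_l$, $\theta_{ln}$ and of $\dot\vp_i=V_i+\dpsi\,\vp_{i1}'$ matches the formulas (\ref{41})--(\ref{46}) and (\ref{39a}).
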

To prove the lemma
let us separate all the terms of $u^m$ into three groups: one-phase, two-phase and three-phase functions:
\be\label{53}
 u^m=\sum_{i=1}^{3}Y_i^m+\sum_{l,n}\sum_{k=1}^{m-1}C_m^kY_l^{m-k}Y_n^k +\sum_{j=1}^{m-1}\sum_{k=1}^{j-1}C_m^jC_j^kY_1^{m-j}Y_2^{j-k}Y_3^k,
\ee
where $Y_i=G_i\om\big(\beta_i(x-\vp_i)/\varepsilon\big)$.
Now considering $u^m$ in the weak sense we change the variable: $x=\vp_i+\ve\eta/\beta_i$,
$x=\vp_n+\ve\eta/\beta_n$, and $x=\vp_3+\ve\eta/\beta_3$ respectively for the integrals of the groups.
Next, preparing the same transformations as in Subsection 1.3 and applying  (\ref{23e}) again we pass to the formula (\ref{39}).

In the same manner one can prove the following proposition:
\begin{lem} \label{lem2}
Let the assumptions (\ref{23}), (\ref{24}) be satisfied for $u$ of the form
(\ref{38}). Then the following asymptotic expansions hold:
\begin{align}
&(\ve u_x)^2=\ve a'_2    \Big\{\sum_{i=1}^3\beta_i^2K_{i0}^{(2)}\delta(x-\vp_{i})+\mathfrak{R}_{(1),2}\delta(x-x^*)\Big\}
+O_{\mathcal{D}'}(\varepsilon^2),\label{54}\\
&\frac{\pa }{\pa t}(\ve u_x)^2= a'_2 \dpsi\frac{d\mathfrak{R}_{(1),2}}{d\tau}\delta(x-x^*)
-\ve a'_2\sum_{i=1}^3\beta_i^2V_iK_{i0}^{(2)}\delta'(x-\vp_{i}) \notag\\
&-\ve a'_2\dpsi\frac{d}{d\tau}\Big\{\sum_{i=1}^3\beta_i^2K_{i0}^{(2)}\vp_{i1}+\mathfrak{R}_{(1),2}^{(1)}\Big\}\delta'(x-x^*)
+O_{\mathcal{D}'}(\varepsilon^2),\label{55}\\
&(\ve^2 u_{xx})^2=\ve a''_2    \Big\{\sum_{i=1}^3\beta_i^4K_{i0}^{(2)}\delta(x-\vp_{i})+\mathfrak{R}_{(2),2}\delta(x-x^*)\Big\}
+O_{\mathcal{D}'}(\varepsilon^2),\label{56}\\
&\ve^2u^4 u_{xx}=\ve a_{23} \Big\{-4 \sum_{i=1}^3\beta_i^2K_{i0}^{(5)}\delta(x-\vp_{i})+\mathfrak{L}\delta(x-x^*)\Big\}
+O_{\mathcal{D}'}(\varepsilon^2),\label{57}\\
&\ve^2 u_xu_t=-\ve a'_2    \sum_{i=1}^3\beta_i^2V_iK_{i0}^{(2)}\delta(x-\vp_{i})-\ve a'_2  \mathfrak{P}\delta(x-x^*)
+O_{\mathcal{D}'}(\varepsilon^2),\label{58}
\end{align}
where $a'_2$, $\dpsi$,  and $K_{i0}^{(2)}$ are defined in (\ref{3a}), (\ref{40}), $\mathfrak{P}=\dpsi(\mathfrak{S}+\mathfrak{S}_G)+\mathfrak{M}$, $\mathfrak{S}$, $\mathfrak{S}_G$, $\mathfrak{M}$, and other notation are deciphered in Attachment, Subsections 6.1 and 6.2.
\end{lem}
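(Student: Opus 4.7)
The plan is to mirror the proof of Lemma \ref{lem1} applied in turn to each of the five quantities $(\ve u_x)^2$, $(\ve^2 u_{xx})^2$, $\ve^2 u^4 u_{xx}$, $\ve^2 u_x u_t$, and the time derivative of $(\ve u_x)^2$. In each case I would first express the quantity polynomially in the three atoms $u_i = G_i \om(\beta_i(x-\vp_i)/\ve)$ and their $\ve$-scaled derivatives, and then group the resulting monomials by the number of distinct indices that appear: one-phase, two-phase, and, only for $\ve^2 u^4 u_{xx}$, three-phase contributions. The two-phase and three-phase terms decay exponentially in $|\vp_l - \vp_n|/\ve$, so they automatically concentrate at the collision point $x^*$; the one-phase terms live at $\vp_i$ away from $x^*$ but, via the splitting $G_i = A_i + S_i$ together with (\ref{23}), their ``$S_i$-part'' also collapses to $x^*$.

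For the one-phase contributions I would substitute $\eta = \beta_i(x-\vp_i)/\ve$, Taylor expand $\psi$ about $\vp_i$, and recognize the resulting integrals against $(\om')^2$ or $(\om'')^2$ as $a'_2$ or $a''_2$; splitting $K_i^{(2)} = K_{i0}^{(2)} + K_{i1}^{(2)}$ as in (\ref{40}) and transporting the $K_{i1}^{(2)}$ piece to $\delta(x-x^*)$ via (\ref{23e}) produces the interaction part $\mathfrak{R}_{(1),2}$ or $\mathfrak{R}_{(2),2}$. For two-phase $(l,n)$ cross terms I would substitute $\eta = \beta_n(x-\vp_n)/\ve$ and identify the resulting integral as a convolution $\lambda_{m,k}(\sigma_{ln})$ in the notation (\ref{27}); for the three-phase contribution inside $\ve^2 u^4 u_{xx}$ I would organize the binomial expansion exactly as in (\ref{44}), yielding the $\lambda_{m,123}^{(i),(j,k)}$ of (\ref{51}).

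To handle the $\pa/\pa t$-expansions (\ref{55}) I would push the spatial weak expansion one order deeper in $\ve$, including the $\ve^2$ correction on $\delta'(x-x^*)$ analogous to (\ref{26a}). Differentiation then produces three kinds of contributions: the $-V_i \delta'(x-\vp_i)$ pieces coming from $\pa \vp_{i0}/\pa t$ applied to the leading $\delta(x-\vp_i)$, the $\dpsi\, d/d\tau$-pieces on $\delta(x-x^*)$ coming from the $\tau$-dependent coefficient of the leading $O(\ve)$ term (using $d\tau/dt = \dpsi/\ve$), and the $\dpsi\, d/d\tau$-pieces on $\delta'(x-x^*)$ coming from the $\ve^2$-correction multiplied by $d\tau/dt$. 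This explains the appearance of the extra phase-correction bookkeeping $\mathfrak{R}_{(1),2}^{(1)}$.

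The main obstacle is the expansion (\ref{58}) of $\ve^2 u_x u_t$. The difficulty is that $\pa u/\pa t$ has two distinct fast-scale components: one coming from $\pa \vp_{i0}/\pa t = V_i$, and one coming from the $\tau$-dependence inside $\vp_i = \vp_{i0}+\ve\vp_{i1}(\tau)$ and inside $G_i = A_i+S_i(\tau)$, which generate factors of $\dpsi/\ve$. Consequently $\ve \pa u/\pa t$ is $O(1)$ overall but is assembled from formally $O(\ve^{-1})$ ingredients that must be collected before multiplying by $\ve u_x$ and localized in the weak sense. Repackaging these pieces into the building blocks $\mathfrak{S}$, $\mathfrak{S}_G$, $\mathfrak{M}$ described in the Appendix is essentially the whole combinatorial content of the lemma; the underlying analytic mechanism is just the substitute-and-localize routine already used for (\ref{54})--(\ref{57}).
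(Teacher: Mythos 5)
Your proposal is correct and follows essentially the same route as the paper, which in fact gives no separate argument for Lemma~2 beyond the remark that it is proved ``in the same manner'' as Lemma~1: decompose into one-, two-, and three-phase monomials, localize by the change of variables and (\ref{23e}), and collect the resulting convolutions. Your additional observation that $\ve\,\pa u/\pa t$ assembles the $O(\ve^{-1})$ contributions from $V_i$, $dS_i/d\tau$, and $d\vp_{i1}/d\tau$ into the blocks $\mathfrak{M}$, $\mathfrak{S}_G$, $\mathfrak{S}$ is exactly the bookkeeping behind $\mathfrak{P}=\dpsi(\mathfrak{S}+\mathfrak{S}_G)+\mathfrak{M}$ in (\ref{58}).
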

Now we substitute the expansions  (\ref{39}), (\ref{39a}), and (\ref{54})-(\ref{58})
into (\ref{19}), (\ref{20}), (\ref{37}) and obtain the similar (\ref{28})-(\ref{30}) system. Namely, the algebraic system for each $i=1,2,3$:
\begin{align}
&-V_iK_{i0}^{(1)}+a_4K_{i0}^{(4)}=0, \label{59}\\
&-a_2V_iK_{i0}^{(2)}+\frac85a_5K_{i0}^{(5)}-3a'_2\beta_i^2K_{i0}^{(2)}=0, \label{60}\\
&-a'_2\beta_i^2V_iK_{i0}^{(2)}+\frac25a_5V_iK_{i0}^{(5)}+2a'_2\beta_i^2V_iK_{i0}^{(2)}\notag\\
&\qquad\qquad-a_8K_{i0}^{(8)}+8a_{23}\beta_i^2K_{i0}^{(5)}-a''_2\beta_i^4K_{i0}^{(2)}=0, \label{61}
\end{align}
the system of functional equations:
\begin{align}
&\sum_{i=1}^3K_{i1}^{(1)}=0,\label{62}\\
&\mathfrak{R}_2=0,\label{63}\\
&a'_2\mathfrak{R}_{(1),2}-\frac25a_5\mathfrak{R}_{5}=0, \label{64}
\end{align}
and the system of ordinary differential equations:
\begin{align}
&-\dpsi\frac{d}{d\tau}\Big\{\sum_{i=1}^3K_{i0}^{(1)}\vp_{i1}+\chi_iK_{i1}^{(1)}\Big\}+a_4\mathfrak{R}_4=0,\label{65}\\
&-a_2\dpsi\frac{d}{d\tau}\Big\{\sum_{i=1}^3K_{i0}^{(2)}\vp_{i1}+\mathfrak{R}_{2}^{(1)}\Big\}+\frac85a_5\mathfrak{R}_5
-3a'_2\mathfrak{R}_{(1),2}=0,\label{66}\\
&\dpsi\frac{d}{d\tau}\Big\{-a'_2\Big(\sum_{i=1}^3\beta_i^2K_{i0}^{(2)}\vp_{i1}+\mathfrak{R}_{(1),2}^{(1)}\Big)
+\frac25a_5\Big(\sum_{i=1}^3K_{i0}^{(5)}\vp_{i1}+\mathfrak{R}_{5}^{(1)}\Big)\Big\}\notag\\
&\qquad\qquad+2a'_2\mathfrak{P}-a_8\mathfrak{R}_{8}-2a_{23}\mathfrak{L}-a''_2\,\mathfrak{R}_{(2),2}=0.\label{67}
\end{align}
Let us overcome the first obstacle: for each $i$ the system (\ref{59})-(\ref{61}) of three equation contains only two free parameters $A_i$,
$V_i$.
\begin{lem} \label{lem3}
Let  $\om(\eta)$, $A_i=A(\beta_i)$, and
$V_i=V(\beta_i)$ be of the form
(\ref{2})-(\ref{3b}). Then the equalities (\ref{59})-(\ref{61}) are satisfied uniformly in $\beta_i>0$.
\end{lem}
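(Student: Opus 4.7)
The strategy is direct substitution of the soliton relations $V_i=a_4 A_i^3$ and $\beta_i^2=\gamma^3 A_i^3$ (equivalent to (\ref{3}) together with $\beta^{2/3}=\gamma A$) into each of (\ref{59})-(\ref{61}). After this substitution the index $i$ becomes inactive and a short list of scalar identities among $a_k$, $a'_2$, $a''_2$, $a_{23}$, $\gamma$ remains to be verified; I treat a single triple $(A,\beta,V)$ throughout.

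For (\ref{59}), dividing by $K_{i0}^{(1)}=A/\beta$ leaves $-V+a_4 A^3=0$, which is the defining relation itself. For (\ref{60}), dividing by $K_{i0}^{(2)}=A^2/\beta$ and substituting $V=a_4 A^3$, $\beta^2=\gamma^3 A^3$ reduces the equation to the scalar identity $-a_2 a_4+\frac{8}{5}a_5-3a'_2 \gamma^3=0$. The two equalities packaged in (\ref{3b}) yield $\frac{8}{5}a_5=\frac{16}{7}a_2 a_4$ and $3a'_2\gamma^3=\frac{9}{7}a_2 a_4$, so the left-hand side collapses to $\bigl(-1+\frac{16}{7}-\frac{9}{7}\bigr)a_2 a_4=0$.

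For (\ref{61}) the analogous reduction (after clearing the common factor $A^6/\beta$) produces
\begin{equation*}
a'_2 a_4\gamma^3+\frac{2}{5}a_4 a_5-a_8+8 a_{23}\gamma^3-a''_2 \gamma^6=0,
\end{equation*}
which now involves $a_8$, $a_{23}$, $a''_2$ not controlled by (\ref{3b}) alone. The missing identities come from the profile ODE for $\om$: plugging the soliton (\ref{2}) into (\ref{1}) and integrating once under decay at $\pm\infty$ yields $\gamma^3\om''=a_4\om-\om^4$. Squaring and integrating gives $\gamma^6 a''_2=a_8-\frac{13}{7}a_2 a_4^2$; multiplying the ODE by $\om^4$, integrating, and using $\int\om^4\om''\,d\eta=-4\int\om^3(\om')^2\,d\eta=-4 a_{23}$ gives $8\gamma^3 a_{23}=2 a_8-\frac{20}{7}a_2 a_4^2$ (both after invoking $a_4 a_5=\frac{10}{7}a_2 a_4^2$). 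Combined with $a'_2 a_4\gamma^3=\frac{3}{7}a_2 a_4^2$ and $\frac{2}{5}a_4 a_5=\frac{4}{7}a_2 a_4^2$, the displayed left-hand side reorganizes into $a_8$ with coefficient $-1+2-1=0$ plus $a_2 a_4^2$ with coefficient $\frac{1}{7}(3+4-20+13)=0$.

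The main obstacle is the last step: one must recognize that the three "new" integrals $a_8$, $a_{23}$, $a''_2$ are not independent but are tied to $a_2 a_4^2$ by the profile ODE, and then carry the bookkeeping through cleanly. Conceptually the statement is natural, for (\ref{59})-(\ref{61}) are precisely the $\delta'(x-\vp_i)$ coefficients in the weak expansions (\ref{39a}), (\ref{55})-(\ref{58}) of the exact conservation laws (\ref{31}), (\ref{32}), (\ref{35}) evaluated on a single solitary wave; since any exact traveling-wave solution satisfies those laws identically, the vanishing cannot fail.
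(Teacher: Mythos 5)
Your proof is correct and follows essentially the same route as the paper: reduce (\ref{59})--(\ref{60}) to the defining relations (\ref{2})--(\ref{3b}), collapse (\ref{61}) to a single scalar identity among the moments $a_k$, $a'_2$, $a''_2$, $a_{23}$, and close it with integral identities derived from the profile equation $\gamma^3\om''=a_4\om-\om^4$. The only cosmetic difference is that you multiply the profile ODE by $\om^4$ where the paper multiplies it by $\om''$ (yielding its identity (\ref{70})); the two identities are interchangeable modulo (\ref{3b}) and the squared-ODE identity (\ref{71}), which both arguments use.
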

\begin{proof}
Obviously, equations (\ref{59}),  (\ref{60}) coincide with (\ref{28}) and
imply again the formulas  (\ref{2})-(\ref{3b}). Substituting them into (\ref{61}), we transform it to the following form:
\begin{equation}\label{68}
a_2a_4^2-a_8-a''_2\gamma^6+8a_{23}\gamma^3=0.
\end{equation}
Next we note that $\om(\eta)$ satisfies the model equation
\begin{equation}\label{69}
\gamma^3\frac{d^2\omega}{d\eta^2}=a_4\omega-\omega^4.
\end{equation}
Multiplying (\ref{69}) for $\om''$ and integrating, we obtain the identity
\begin{equation}\label{70}
4a_{23}=\gamma^3a''_2+a_4a'_2.
\end{equation}
On the other hand, integrating the squares of the left-hand and right-hand parts of (\ref{69}),
we pass to another identity:
\begin{equation}\label{71}
a_{8}=\gamma^6a''_2-a_2a_4^2+2a_4a_5.
\end{equation}
This and (\ref{3b}) verify the equality (\ref{68}).
\end{proof}
Since the system of six equations (\ref{62})-(\ref{67}) contains six free functions,
we obtain the first formal result
\begin{teo} \label{teo1}
Let the system  (\ref{62})-(\ref{67}) have a solution which satisfies the assumptions of
the form (\ref{23}), (\ref{24}). Then the solitary waves (\ref{38}) collide preserving   $\mod O_{\mathcal{D}'}(\varepsilon^2)$
the KdV-type scenario of interaction.
\end{teo}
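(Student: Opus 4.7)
The plan is to show that the ansatz (\ref{38}), under the assumed existence of a solution to (\ref{62})-(\ref{67}) with the behavior (\ref{23})-(\ref{24}), satisfies Definition \ref{def2.3}, and then to read off the elastic-collision conclusion from (\ref{23})-(\ref{24}). First I would substitute (\ref{38}) into the three weak identities (\ref{19}), (\ref{20}), (\ref{37}), using Lemma \ref{lem1} for the weak expansions of $u$, $u^4$, $u^5$, $u^8$, $u^4 u_{xx}$ and their $t$-derivatives, and Lemma \ref{lem2} for $(\ve u_x)^2$, $(\ve^2 u_{xx})^2$, $\ve^2 u_x u_t$. Because each expansion is a combination of $\delta(x-\vp_i)$, $\delta(x-x^*)$ and their first derivatives modulo $O_{\mathcal{D}'}(\ve^2)$, the left-hand sides of (\ref{19}), (\ref{20}), (\ref{37}) become linear combinations of these four families of distributions with coefficients depending on $t$ and $\tau$.

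Next I would exploit the linear independence of the eight distributions $\delta'(x-\vp_i)$ ($i=1,2,3$), $\delta(x-x^*)$ and $\delta'(x-x^*)$ (which comes from the fact that the trajectories $\vp_i(t)$ are mutually distinct outside a vanishing neighborhood of $t=t^*$ while $x=x^*$ is a single point) to conclude that the vanishing of each coefficient mod $O_{\mathcal{D}'}(\ve^2)$ is necessary and sufficient for (\ref{19}), (\ref{20}), (\ref{37}). The coefficients of $\delta'(x-\vp_i)$ yield exactly the algebraic Hugoniot-type relations (\ref{59})-(\ref{61}), which by Lemma \ref{lem3} are automatically satisfied whenever $A_i$, $\beta_i$, $V_i$ are chosen according to (\ref{2})-(\ref{3b}). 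The coefficients of $\delta(x-x^*)$ and $\delta'(x-x^*)$ produce precisely the functional equations (\ref{62})-(\ref{64}) and the differential equations (\ref{65})-(\ref{67}), respectively; these are supplied by the hypothesis of the theorem.

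With all coefficients eliminated, (\ref{19}), (\ref{20}), (\ref{37}) hold mod $O_{\mathcal{D}'}(\ve^2)$, uniformly in $t$, so $u$ given by (\ref{38}) is a weak asymptotic solution in the sense of Definition \ref{def2.3}. It remains to translate the boundary behavior (\ref{23}), (\ref{24}) of $S_i(\tau)$ and $\vp_{i1}(\tau)$ into the KdV-type scenario: as $\tau \to -\infty$ (before the interaction time $t^*$) the amplitude corrections $S_i$ and phase corrections $\vp_{i1}$ vanish, so the ansatz reduces to three non-interacting solitons (\ref{2}) moving with their original amplitudes $A_i$ and velocities $V_i$; as $\tau \to +\infty$ (after the interaction) the amplitudes $G_i=A_i+S_i$ relax back to $A_i$, while each phase picks up only a constant shift $\vp_{i1}^\infty$. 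Hence the three solitary waves emerge unchanged in shape and speed, with only phase shifts, which is the KdV-type elastic collision mod $O_{\mathcal{D}'}(\ve^2)$.

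The main obstacle in the argument is not the algebra of separation but the legitimacy of the ``freezing'' implicit in the use of (\ref{23e}): one must check that the remainders generated by expanding the fast-variable integrals around $x=x^*$, together with the products of slowly growing $\chi_i$ against distributions supported at $x^*$, are indeed $O_{\mathcal{D}'}(\ve^2)$ uniformly in $\tau$. Since the convolutions $\lambda_{m,k}^{(j)}(\sigma_{ln})$ and their three-wave analogues (\ref{51}) decay exponentially as $|\sigma_{ln}|\to\infty$, the factors $\tau^j$ coming from $\chi_i$ are absorbed, which is what makes Lemmas \ref{lem1}-\ref{lem2} apply uniformly and justifies the passage from the pointwise vanishing of the distributional coefficients to the uniform estimate required by Definition \ref{def2.3}.
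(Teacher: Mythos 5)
Your argument follows the paper's own route: substitute the weak expansions of Lemmas \ref{lem1} and \ref{lem2} into (\ref{19}), (\ref{20}), (\ref{37}), use the linear independence of $\delta'(x-\vp_i)$ ($i=1,2,3$), $\delta(x-x^*)$, $\delta'(x-x^*)$ (five families, not eight, though this is immaterial) to reduce Definition \ref{def2.3} to the system (\ref{59})--(\ref{67}), dispose of the algebraic part by Lemma \ref{lem3}, and read the KdV-type scenario off the limit behavior (\ref{23}), (\ref{24}). This is exactly how the paper obtains the theorem as an immediate consequence of the derivation of (\ref{59})--(\ref{67}); your closing remark on the uniformity of the remainder in (\ref{23e}) is a sensible elaboration rather than a departure.
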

Moreover, similar to the Rankine-Hugoniot condition, which is simply the conservation law for the shock-wave solution,
the Hugoniot-type conditions (\ref{59})-(\ref{67}) imply the verification of some conservation laws:
\begin{teo} \label{teo2}
Let the assumptions of Theorem 1 be satisfied. Then  the ansatz (\ref{38}) is a
$\mod O_{\mathcal{D}'}(\varepsilon^2)$  asymptotic solution of the equation (\ref{1}) if and only if (\ref{38})  satisfies the
conservation laws
\be\label{72}
\frac{d}{dt}\int_{-\infty}^\infty u dx=0,\; \frac{d}{dt}\int_{-\infty}^\infty u^2 dx=0,\;
\frac{d}{dt}\int_{-\infty}^\infty \Big\{\Big(\ve\frac{\pa u}{\pa
x}\Big)^2-\frac25u^5\Big\} dx=0,
\ee
and the energy relations
\begin{align}
&\frac{d}{dt}\int_{-\infty}^\infty xudx-\int_{-\infty}^\infty u^4dx=0,\notag\\
&\frac{d}{dt}\int_{-\infty}^\infty xu^2dx-\frac85\int_{-\infty}^\infty u^5dx+3\int_{-\infty}^\infty\left(\ve\frac{\pa u}{\pa x}\right)^2=0,
\label{73}\\
&\frac{d}{dt}\left\{\int_{-\infty}^\infty x\Big(\ve\frac{\pa u}{\pa x}\Big)^2dx-\frac25\int_{-\infty}^\infty xu^5dx\right\}\notag\\
&\qquad\qquad+2\ve^2\int_{-\infty}^\infty\frac{\pa u}{\pa t}\frac{\pa u}{\pa x}dx
+\int_{-\infty}^\infty\left(u^4+\ve^2\frac{\pa^2 u}{\pa x^2}\right)^2dx=0.
\notag
\end{align}
\end{teo}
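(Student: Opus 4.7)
The strategy is to exploit the correspondence between the weak-asymptotic relations (\ref{19}), (\ref{20}), (\ref{37}) tested against arbitrary $\psi$, and the scalar identities (\ref{72}), (\ref{73}), which amount to testing against the particular functions $\psi\equiv 1$ and $\psi\equiv x$ (suitably cut off). By Lemmas \ref{lem1}, \ref{lem2}, substitution of the ansatz (\ref{38}) turns each of (\ref{19}), (\ref{20}), (\ref{37}) into a linear combination of $\delta(x-\vp_i)$, $\delta'(x-\vp_i)$, $\delta(x-x^*)$, $\delta'(x-x^*)$ modulo $O_{\mathcal{D}'}(\ve^2)$. The weak-solution property therefore amounts to the vanishing of every such coefficient, which is exactly the system (\ref{59})--(\ref{67}). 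The algebraic subsystem (\ref{59})--(\ref{61}) has already been handled by Lemma \ref{lem3}, so what remains is the equivalence of (\ref{62})--(\ref{67}) with the pair (\ref{72}), (\ref{73}).

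For the \emph{only-if} direction, pick $\psi_R^{(0)}\in\mathcal{D}(\mathbb R)$ with $\psi_R^{(0)}\equiv 1$ on $[-R,R]$, and $\psi_R^{(1)}\in\mathcal{D}(\mathbb R)$ that agrees with $x$ on $[-R,R]$, with $R$ chosen so large that the $x$-support of (\ref{38}) lies well inside $[-R,R]$ for all $t\in[0,T]$ up to exponentially small tails. Substituting $\psi=\psi_R^{(0)}$ kills every $\psi_x$-integral (the waves live where $\psi_R^{(0)}$ is flat) and reduces (\ref{19}), (\ref{20}), (\ref{37}) to the three conservation laws (\ref{72}). Substituting $\psi=\psi_R^{(1)}$ yields instead $\psi_x\equiv 1$ on the support and produces (\ref{73}). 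Both reductions are understood $\mathrm{mod}\,O(\ve^2)$.

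For the converse direction, assume (\ref{72}), (\ref{73}) hold. Combining Lemma \ref{lem1} with $\int u\,dx=\ve a_1\{\sum K_{i0}^{(1)}+\mathfrak R_1\}+O(\ve^2)$, where the first sum is constant in $t$, the first identity of (\ref{72}) becomes $\dpsi\,d\mathfrak R_1/d\tau=0$. The boundary conditions (\ref{23}), (\ref{24}) force $\mathfrak R_1\to 0$ as $\tau\to\pm\infty$, whence $\mathfrak R_1\equiv 0$, i.e.\ (\ref{62}). The same scheme applied to the densities $u^2$ and $(\ve u_x)^2-(2/5)u^5$ gives (\ref{63}) and (\ref{64}). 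To obtain (\ref{65})--(\ref{67}) from (\ref{73}), expand the moments $\int x u\,dx$, $\int x u^2\,dx$, and $\int x\{(\ve u_x)^2-(2/5)u^5\}dx$ by means of Lemmas \ref{lem1}, \ref{lem2} together with $\vp_i=x^*+\ve\chi_i$. In each resulting expression the already-established (\ref{62})--(\ref{64}) eliminate the $\delta(x-x^*)$-type contributions, while the algebraic identities (\ref{59})--(\ref{61}) from Lemma \ref{lem3} cancel the contributions proportional to $V_iK_{i0}^{(\cdot)}$; what survives at order $\ve$ is exactly the ODE (\ref{65}), (\ref{66}), or (\ref{67}) for the phase corrections $\vp_{i1}$. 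With every coefficient now vanishing, (\ref{19}), (\ref{20}), (\ref{37}) hold for every $\psi\in\mathcal{D}(\mathbb R)$ by linearity, so (\ref{38}) is a weak asymptotic solution.

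The delicate point is this converse direction: only two scalar moments of each conservation density are available, yet we must recover \emph{four} independent delta-type coefficients in the expansion. The mechanism that makes this work is the $O(\ve)$ proximity $\vp_i=x^*+\ve\chi_i$, which via (\ref{23e}) couples the $\delta(x-\vp_i)$ contributions to the $\delta'(x-x^*)$ ones precisely through the factor $\ve\chi_i$. This is exactly the additional information that the $\psi\equiv x$ moment carries beyond the $\psi\equiv 1$ moment, while the necessary cancellations of the $V_i$-terms are supplied by Lemma \ref{lem3}. Organising this bookkeeping cleanly, especially for the third conservation law (\ref{37}) where the fluxes involve $\mathfrak P$, $\mathfrak L$, $\mathfrak R_{(2),2}$, is the main routine step.
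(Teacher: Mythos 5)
Your proposal is correct and follows essentially the same route as the paper: the paper's one-line proof ("rewrite the equations (\ref{59})--(\ref{67}) in the integral form") is exactly your identification of (\ref{72}), (\ref{73}) with the relations (\ref{19}), (\ref{20}), (\ref{37}) tested against $\psi\equiv 1$ and $\psi\equiv x$, with Lemma \ref{lem3} supplying the cancellation of the $V_iK_{i0}^{(\cdot)}$ terms in the converse direction. You have simply made explicit the bookkeeping that the paper leaves implicit.
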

To prove this conclusion it is enough to rewrite the equations  (\ref{59})-(\ref{67}) in the integral form.
\section{Analysis of the Hugoniot-type conditions}
\subsection{Transformations}
Let us pay the attention to the equations (\ref{62})-(\ref{64}). Normalization
\be
\kappa_i=\gamma\beta_3^{1/3}S_i/\beta_i\label{74}
\ee
implies
\be
K^{(m)}_i=\frac{\beta_3^{2m/3-1}}{\gamma^m}\,\theta_{i3}^{2m/3-1}\,\Lambda^{\,m}_i,\quad{\text{where}}\quad\Lambda_i=1+\theta_{i3}^{1/3}\kappa_i.\label{75}
\ee
We denote $\ol{\mathfrak{R}}_m=\gamma^m\mathfrak{R}_m/\beta_3^{2m/3-1}$ and obtain:
\begin{align}
&\ol{\mathfrak{R}}_m=\sum_{i=1}^3\theta_{i3}^{2m/3-1}(\Lambda^{\,m}_i-1)+\sum_{l,n}\ol{R}_{m,ln}+\ol{R}_{m,123}\label{77}\\
&\ol{R}_{m,ln}=\sum_{k=1}^{m-1}C_m^k\theta_{l3}^{2(m-k)/3}\theta_{n3}^{2k/3-1}\,\Lambda^{\,m-k}_l\,\Lambda^{\,k}_n\lambda_{m,k}(\sigma_{ln}),\label{78}\\
&\ol{R}_{m,123}=\sum_{j=2}^{m-1}\sum_{k=1}^{j-1}C_m^jC_j^k\theta_{13}^{2(m-j)/3}\theta_{23}^{2(j-k)/3}\,\Lambda^{\,m-j}_1\,\Lambda^{\,j-k}_2
\,\Lambda^{\,k}_3\lambda_{m,123}^{(0),(j,k)}.\label{79}
\end{align}
This and similar formulas for $\mathfrak{R}_{(1),2}\ipd\beta_3^{7/3}\ol{\mathfrak{R}}_{(1),2}/\gamma^2$ (see Attachment) allow us to transform (\ref{62})-(\ref{64}) to the following form:
\begin{align}
&\sum_{i=1}^3\kappa_i=0,\label{80}\\
&\sum_{i=1}^3\theta_{i3}^{1/3}(\Lambda^{\,2}_i-1)+2\sum_{l,n}\theta_{l3}^{1/3}\theta_{ln}^{1/3}\,\Lambda_l\,\Lambda_n\,\lambda_{2,1}(\sigma_{ln})=0,\label{81}\\
&\sum_{i=1}^3\theta_{i3}^{7/3}(\Lambda^{\,2}_i-1)+2\sum_{l,n}
\theta_{l3}^{5/3}\theta_{n3}^{2/3}\,\Lambda_l\,\Lambda_n\lambda_{I1}^{(0)}(\sigma_{ln})-\frac43\ol{\mathfrak{R}}_5=0, \label{82}
\end{align}
where the equalities (\ref{3b}), the notation (\ref{77})-(\ref{79}), and (\ref{3A}) have been used.

Next let us simplify the equations (\ref{65})-(\ref{67}). We note firstly that
in view of (\ref{62}) and the identity
\be
\beta_l(\chi_l-\chi_n)=\sigma_{ln} \label{83}
\ee
it is possible to eliminate $\chi_i$ from the left-hand side of (\ref{65}), since
$$
\sum_{i=1}^3K_{i0}^{(1)}\vp_{i1}+\chi_iK_{i1}^{(1)}=\sum_{i=1}^3K_{i0}^{(1)}\vp_{i1}+\frac{\sigma_{12}}{\beta_1}K_{11}^{(1)}
-\frac{\sigma_{23}}{\beta_2}K_{31}^{(1)}.
$$
In the same manner, applying  (\ref{63}) and (\ref{64}), we simplify the equations (\ref{66}), (\ref{67}).
Thus, we transform (\ref{65})-(\ref{67}) to the following form:
\begin{align}
&\dpsi\frac{d}{d\tau}\Big\{\sum_{i=1}^3K_{i0}^{(1)}\vp_{i1}+\frac{\sigma_{12}}{\beta_1}K_{11}^{(1)}
-\frac{\sigma_{23}}{\beta_2}K_{31}^{(1)}\Big\}=f,\label{84}\\
&\dpsi\frac{d}{d\tau}\Big\{\sum_{i=1}^3K_{i0}^{(2)}\vp_{i1}+\sum_{l,n}C_{2,ln}+\frac{\sigma_{12}}{\beta_1}K_{11}^{(2)}\notag\\
&\qquad\qquad\qquad\qquad-\frac{\sigma_{23}}{\beta_2}\big(K_{31}^{(2)}+R_{2,13}+R_{2,23}\big)\Big\}=F,\label{85}\\
&\dpsi\frac{d}{d\tau}\Big\{\sum_{i=1}^3\Big(\beta_i^2K_{i0}^{(2)}-\frac43\gamma^3K_{i0}^{(5)}\Big)\vp_{i1}+\mathfrak{K}\Big\}
-2\mathfrak{S}=\mathfrak{F}, \label{86}
\end{align}
where
\be
f=a_4\mathfrak{R}_4,\quad F=\frac{a'_2}{a_2}\mathfrak{R}_{(1),2},\quad \mathfrak{F}=2\mathfrak{M}-\frac{a_8}{a'_2}\mathfrak{R}_{8}-2\frac{a_{23}}{a'_2}\mathfrak{L}-\frac{a''_2}{a'_2}\mathfrak{R}_{(2),2},
\label{87}
\ee
and the function  $\mathfrak{K}$ is described in Attachment (see formula (\ref{5A})).

The second step is the elimination of $\vp_{i1}$ from the model system. To do it we divide $\sigma_{ln}$ into the growing ($\os$) and the bounded ($\ts$) parts:
\be\label{89}
\sigma_{ln}=\os+\ts,\quad\os\ipd\frac{\beta_l}{\dpsi}(V_l-V_n)\tau
\ee
and rewrite the identity (\ref{83}):
\be\label{90}
\ts=\beta_l(\vp_{l1}-\vp_{n1}).
\ee
Thus
\be\label{91}
\vp_{11}=\frac{\tilde{\sigma}_{12}}{\beta_1}+\vp_{21},\quad \vp_{31}=-\frac{\tilde{\sigma}_{23}}{\beta_2}+\vp_{21}.
\ee
Substituting (\ref{91}) into (\ref{84}) we obtain
\be
\dpsi\frac{d\vp_{21}}{d\tau}=-\frac{\dpsi}{r_1}\frac{d}{d\tau}\Big\{\frac{\tilde{\sigma}_{12}}{\beta_1}K_{10}^{(1)}
+\frac{\sigma_{12}}{\beta_1}K_{11}^{(1)}-\frac{\tilde{\sigma}_{23}}{\beta_2}K_{30}^{(1)}
-\frac{\sigma_{23}}{\beta_2}K_{31}^{(1)}\Big\}+\frac {f}{r_1}.\label{92}
\ee
Here and in what follows we use the notation
\be
r_j=\sum_{i=1}^3K_{i0}^{(j)}\quad \text{for}\quad  j=1\quad \text{and}\quad j=2.\label{92a}
\ee
Next we use the equalities (\ref{91}), (\ref{92}),  and
\be \label{93}
\tilde{\sigma}_{13}=\tilde{\sigma}_{12}+\theta_{12}\tilde{\sigma}_{23},\quad \ol{\sigma}_{13}=\ol{\sigma}_{12}+\theta_{12}\ol{\sigma}_{23},
\ee
and  rewrite  (\ref{85}), (\ref{86}) as  equations for new unknowns $\tilde{\sigma}_{12}$, $\tilde{\sigma}_{23}$. After normalization (\ref{74}) we pass to the following model equations:
\begin{align}
&\dpsi\frac{d}{d\tau}\Big\{p_{10}\frac{\tilde{\sigma}_{12}}{\beta_1}+p_{11}\frac{{\sigma}_{12}}{\beta_1}
-p_{30}\frac{\tilde{\sigma}_{23}}{\beta_2}
-p_{31}\frac{{\sigma}_{23}}{\beta_2}+\sum_{l,n}C_{2,ln}\Big\}
=F-\frac{r_2}{r_1}f,\label{94}\\
&\dpsi \Bigg\{\frac{d}{d\tau}\Big\{e_{10}\frac{\tilde{\sigma}_{12}}{\beta_1}+e_{11}\frac{\sigma_{12}}{\beta_1}\Big\}
+2\Psi\frac{d}{d\tau}\Big\{K_{11}^{(1)}\frac{\sigma_{12}}{\beta_1}\Big\}
-\frac{d}{d\tau}\Big\{e_{30}\frac{\tilde{\sigma}_{23}}{\beta_2}+e_{31}\frac{\sigma_{23}}{\beta_2}\Big\} \notag\\
&-2\Psi\frac{d}{d\tau}\Big\{K_{31}^{(1)}\frac{\sigma_{23}}{\beta_2}\Big\}
+ r_{1}\frac{d\mathfrak{K}_C}{d\tau}-2r_{1}\mathfrak{S}_G\Bigg\}
=\mathfrak{F}_1,\label{95}
\end{align}
where $\mathfrak{F}_1=r_1\mathfrak{F}+(2\Psi+\sum_{i=1}^3q_{i0}^{(2)})f$ and the coefficients $p^{(k)}_i$, $e_{ki}$, $q_{ik}^{(m)}$, $\Psi$
are presented in Attachment (see formulas (\ref{6A}) - (\ref{11A})).

\subsection{Asymptotic analysis}

To simplify the further analysis let us assume that
\begin{equation}\label{96}
\theta_{23}^{1/3}=\mu, \;
\theta_{12}^{2/3}=\mu^{1+\alpha},\;\text{where}\;\alpha\in[0,1)\;\text{and}\;\mu\;\text{is
sufficiently small}.
\end{equation}
We look for the asymptotic solution of the system (\ref{80})-(\ref{82}) in the form:
\be\label{97}
\kappa_1=\frac12\mu^\alpha(y_1-\mu^{2-\alpha}x_0),\quad \kappa_2=-\frac12\mu^\alpha(y_1+\mu^{2-\alpha}x_0),\quad \kappa_3=\mu^2x_0,
\ee
where $x_0$ and $y_1$ are free functions. Then (\ref{80}) is satisfied, whereas (\ref{81}) and (\ref{82}) imply the system:
\begin{align}
&2x_0-\mu^{\alpha}y_1\Big\{1+\mu\lambda_{2,1}(\sigma_{23})-\mu^{1+\alpha}(1-\lambda_{2,1}(\sigma_{12})+\frac14y_1)-
\mu^{3(1+\alpha)/2}\lambda_{2,1}(\sigma_{12})\Big\}\notag\\
&\qquad=-2\lambda_{2,1}(\sigma_{23})-2\mu^\alpha \lambda_{2,1}(\sigma_{12})
-2\mu^{1+\alpha} \lambda_{2,1}(\sigma_{13})+O_\lambda(\mu^{2}),\label{98}\\
&\Big\{7+40\mu^2\lambda_{5,4}(\sigma_{23})+\frac{37}{2}\mu^{2}x_0\Big\}x_0-5\mu^{1+\alpha}y_1\lambda_{5,4}(\sigma_{23})\notag\\
&\qquad=-10\lambda_{5,4}(\sigma_{23})-10\mu^{1+\alpha}\lambda_{5,4}(\sigma_{13})+O_\lambda(\mu^{2}+\mu^{(3+7\alpha)/2}).\label{99}
\end{align}
Here and in what follows we denote
\be
f(\sigma,\mu)=O_\lambda(\mu^{k})\quad\text{if}\quad \max_{\sigma}|f(\sigma,\mu)|\leq c\mu^k \label{99a}
\ee
and $f(\sigma,\cdot)$ belongs to the Schwartz space.

It is easy to see that the compatibility of the equations (\ref{98}) and (\ref{99}) requires the condition:
$10\lambda_{5,4}(\sigma_{23})=7\lambda_{2,1}(\sigma_{23})+O_\lambda(\mu^{\alpha})$.
\begin{lem} \label{lem4}
Let  $\om(\eta)$ be of the form
(\ref{2}). Then
\be\label{101}
10\lambda_{5,4}(\sigma_{ln})=7\lambda_{2,1}(\sigma_{ln})+3\theta_{ln}\lambda_{I1}^{(0)}(\sigma_{ln})
\ee
for all indices $l,n$.
\end{lem}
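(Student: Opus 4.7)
The plan is to exploit the model ODE (\ref{69}) satisfied by the profile $\omega$, which expresses $\omega^4$ as a linear combination of $\omega$ and $\omega''$, and then to reduce $\lambda_{5,4}$ to a sum of two integrals: a multiple of $\lambda_{2,1}$ and a term involving $\omega''$ which, after one integration by parts, is proportional to $\lambda_{I1}^{(0)}$.

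Concretely, I would first rewrite the model equation (\ref{69}) as $\omega^4(\eta)=a_4\omega(\eta)-\gamma^3\omega''(\eta)$ and substitute it inside the definition
\[
\lambda_{5,4}(\sigma_{ln})=\frac{1}{a_5}\int_{-\infty}^\infty\omega(\eta_{ln})\,\omega^4(\eta)\,d\eta,
\]
which splits the integral into
\[
\lambda_{5,4}(\sigma_{ln})=\frac{a_4}{a_5}\int_{-\infty}^\infty\omega(\eta_{ln})\omega(\eta)\,d\eta-\frac{\gamma^3}{a_5}\int_{-\infty}^\infty\omega(\eta_{ln})\omega''(\eta)\,d\eta.
\]
Using the second identity in (\ref{3b}), namely $2a_2a_4=\tfrac{7}{5}a_5$, the first term becomes exactly $\tfrac{7}{10}\lambda_{2,1}(\sigma_{ln})$, which, once multiplied by $10$, produces the desired $7\lambda_{2,1}(\sigma_{ln})$ on the right-hand side of (\ref{101}).

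For the remaining term I would integrate by parts once, observing that $\tfrac{d}{d\eta}\omega(\eta_{ln})=\theta_{ln}\omega'(\eta_{ln})$; since $\omega$ decays exponentially the boundary contributions vanish, and one obtains
\[
-\frac{\gamma^3}{a_5}\int_{-\infty}^\infty\omega(\eta_{ln})\omega''(\eta)\,d\eta
=\frac{\gamma^3\theta_{ln}}{a_5}\int_{-\infty}^\infty\omega'(\eta_{ln})\omega'(\eta)\,d\eta.
\]
Recognising that (by its Appendix definition) $\lambda_{I1}^{(0)}(\sigma_{ln})=\tfrac{1}{a'_2}\int\omega'(\eta_{ln})\omega'(\eta)\,d\eta$, the last expression equals $\tfrac{\gamma^3a'_2\theta_{ln}}{a_5}\lambda_{I1}^{(0)}(\sigma_{ln})$. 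Multiplying by $10$ and invoking the first identity in (\ref{3b}), $\gamma^3=\tfrac{3}{7}\,\tfrac{a_2a_4}{a'_2}$, combined once more with $2a_2a_4=\tfrac{7}{5}a_5$, gives
\[
\frac{10\gamma^3a'_2}{a_5}=\frac{30}{7}\cdot\frac{a_2a_4}{a_5}=\frac{30}{7}\cdot\frac{7}{10}=3,
\]
which yields precisely $3\theta_{ln}\lambda_{I1}^{(0)}(\sigma_{ln})$ and completes the identity.

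The argument is purely algebraic once the profile equation is invoked, and all integrations by parts are legitimate because $\omega$ and its derivatives decay exponentially. The only subtle point is arithmetic bookkeeping of the constants $a_k$, $a'_2$, $\gamma$ so that the two identities in (\ref{3b}) combine to give exactly $3$ in front of $\theta_{ln}\lambda_{I1}^{(0)}$; I do not anticipate any genuine obstacle beyond this verification.
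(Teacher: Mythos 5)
Your proposal is correct and is essentially the paper's own argument: the paper's one-line proof invokes the profile equation (\ref{69}) together with the identities (\ref{3a}), (\ref{3b}) (and the integration-by-parts identities derived from them), which is exactly your substitution $\om^4=a_4\om-\gamma^3\om''$ followed by one integration by parts and the constant bookkeeping $a_2a_4=\tfrac{7}{10}a_5$, $\gamma^3=\tfrac37 a_2a_4/a'_2$. All steps, including the vanishing boundary terms and the factor $\theta_{ln}$ from differentiating $\om(\eta_{ln})$, check out.
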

To prove the lemma it is enough to
use again the equation (\ref{43}) and the identities
(\ref{3a}), (\ref{3b}), (\ref{42}).

Now we set
\be\label{102}
x_0=-\lambda_{2,1}(\sigma_{23})+\mu^\alpha x_1
\ee
and transform (\ref{98}), (\ref{99}) to the final form:
\begin{align}
&2x_1-r_{12}y_1=-2\lambda_{2,1}(\sigma_{12})-2\mu\lambda_{2,1}(\sigma_{13})+O_\lambda(\mu^{2-\alpha}),\label{103}\\
&r_{21}x_1-\frac57\mu \lambda_{5,4}(\sigma_{23})y_1=-\mu\lambda_{2,1}(\sigma_{13})-\frac37\mu^{3-\alpha}\lambda_{I1}^{(0)}(\sigma_{23})+O_\lambda(\mu^3),\label{104}
\end{align}
where
\begin{align}
&r_{12}=1+\mu\lambda_{2,1}(\sigma_{23})-\mu^{1+\alpha}\big(1-\lambda_{2,1}(\sigma_{12})+\frac14y_1\big)+O_\lambda(\mu^{3(1+\alpha)/2}),\notag\\
&r_{21}=1+\frac{19}{4}\mu^2\lambda_{2,1}(\sigma_{23})+O_\lambda(\mu^{2+\alpha}).\notag
\end{align}
Solving this system we obtain the asymptotic representation:
\begin{align}
&x_{1}=-\mu\big(\lambda_{2,1}(\sigma_{13})-\lambda_{2,1}(\sigma_{12})\lambda_{2,1}(\sigma_{23})\big)+O_\lambda(\mu^{2+\alpha}),\label{105}\\
&y_{1}=2\lambda_{2,1}(\sigma_{12})\Big(1+\mu^{1+\alpha}\big(1-\lambda_{2,1}(\sigma_{12})\big)\Big)+O_\lambda(\mu^{2-\alpha}).\label{106}
\end{align}
Combining (\ref{97}), (\ref{102}), (\ref{105}), and (\ref{106}) we conclude:
\begin{lem} \label{lem5}
Let  there exist functions $\vp_{i1}$, $i=1,2,3$, with the properties (\ref{24}) and let the condition (\ref{96}) be realized.
Then the system (\ref{80}) - (\ref{82}) has the unique solution
\begin{align}
&\kappa_{1}=\mu^\alpha\lambda_{2,1}(\sigma_{12})\big\{1+\mu^{1+\alpha}\big(1-\lambda_{2,1}(\sigma_{12})\big)\big\}
+O_\lambda(\mu^{2}),\label{107}\\
&\kappa_{2}=-\mu^\alpha\lambda_{2,1}(\sigma_{12})\big\{1+\mu^{1+\alpha}\big(1-\lambda_{2,1}(\sigma_{12})\big)\big\}
+O_\lambda(\mu^{2}),\label{108}\\
&\kappa_{3}=-\mu^2\lambda_{2,1}(\sigma_{23})+O_\lambda(\mu^{3+\alpha}),\label{109}
\end{align}
such that $S_i=\beta_i\kappa_i/\gamma\beta_3^{1/3}$  satisfy the assumptions  (\ref{23}).
\end{lem}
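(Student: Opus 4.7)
The plan is to follow the reduction already sketched before the statement and then solve the resulting $2\times 2$ nonlinear system by inverting its leading part. First I would substitute the ansatz (\ref{97}) into (\ref{80})--(\ref{82}): by construction, $\kappa_1+\kappa_2+\kappa_3=0$, so (\ref{80}) is automatic, and the remaining equations become two equations for the two unknowns $x_0,y_1$. Using (\ref{96}) to expand $\theta_{23}=\mu^3$, $\theta_{12}=\mu^{3(1+\alpha)/2}$ in powers of $\mu$, and grouping terms by powers of $\mu$ and $\mu^\alpha$, I would obtain precisely (\ref{98}) and (\ref{99}); here the only subtlety is keeping track of which $\lambda_{m,k}(\sigma_{ln})$ lie in the Schwartz class in $\tau$ (they do, since $\om$ decays exponentially and all the growing parts $\os$ enter only through $\sigma_{ln}$), so the $O_\lambda(\cdot)$ bookkeeping is well-defined.

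Next I would observe that, at leading order in $\mu$, equations (\ref{98}) and (\ref{99}) must be compatible. Comparing their right-hand sides in the coefficient of $x_0$ alone forces a relation between $\lambda_{5,4}(\sigma_{23})$ and $\lambda_{2,1}(\sigma_{23})$, which is exactly the content of Lemma \ref{lem4}. Once this identity is invoked, I would perform the shift (\ref{102}) (choosing the leading value of $x_0$ to kill the obstruction) and reduce (\ref{98})--(\ref{99}) to the form (\ref{103})--(\ref{104}) with the explicit coefficients $r_{12}$, $r_{21}$.

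Then I would solve the linear-in-$(x_1,y_1)$ system (\ref{103})--(\ref{104}) by Cramer's rule. Since $r_{12}=1+O_\lambda(\mu)$ and the off-diagonal coefficient involving $y_1$ is of order $\mu$, the determinant equals $-\frac{10}{7}\mu\lambda_{5,4}(\sigma_{23})-2r_{12}\cdot O(1)+\dots$, which is of order unity (dominated by $-2$ from the $x_1$ column), so the system is non-degenerate and inversion is legitimate. Expanding the solution in powers of $\mu$ gives (\ref{105}), (\ref{106}); substituting back through (\ref{102}) and (\ref{97}), and using Lemma \ref{lem4} once more to clean up cross-terms, yields the advertised formulas (\ref{107})--(\ref{109}).

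Finally, uniqueness follows from the invertibility of the leading $2\times 2$ block noted above, and the assertion about $S_i$ reduces to the decay property (\ref{23}): since $S_i=\beta_i\kappa_i/(\gamma\beta_3^{1/3})$ and each $\kappa_i$ is expressed as a polynomial in the convolutions $\lambda_{m,k}(\sigma_{ln})$ with $\os\to\pm\infty$ as $\tau\to\pm\infty$ (by (\ref{89})), and each such convolution belongs to the Schwartz class in its argument by exponential decay of $\om$, we conclude $S_i\to 0$ as $\tau\to\pm\infty$. The main obstacle is purely bookkeeping: tracking the fractional powers $\mu^\alpha$ with $\alpha\in[0,1)$ in the expansions so that the remainders really are $O_\lambda(\mu^2)$, $O_\lambda(\mu^{3+\alpha})$, and verifying the compatibility identity (\ref{101}) of Lemma \ref{lem4}, without which the system would be inconsistent at order $\mu$.
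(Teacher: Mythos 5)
Your proposal follows the paper's own route essentially verbatim: ansatz (\ref{97}) killing (\ref{80}), reduction to (\ref{98})--(\ref{99}), the compatibility requirement resolved by Lemma \ref{lem4}, the shift (\ref{102}), inversion of the non-degenerate $2\times2$ system (\ref{103})--(\ref{104}), and the Schwartz-class decay of the convolutions giving (\ref{23}). The only blemish is your determinant: for (\ref{103})--(\ref{104}) it is $r_{12}r_{21}-\tfrac{10}{7}\mu\lambda_{5,4}(\sigma_{23})=1+O_\lambda(\mu)$, so it is indeed of order unity but dominated by $r_{12}r_{21}\approx 1$, not by a term of size $2$; this does not affect the argument.
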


To complete the analysis we should prove the solvability of  the system (\ref{94}), (\ref{95}).
Taking into account  (\ref{74}), (\ref{96}), and (\ref{107}) - (\ref{108}), we obtain:
\begin{align}
&\tilde{E}_{11}\frac{d\tilde{\sigma}_{12}}{d\tau}-\theta_{12}\tilde{E}_{12}\frac{d\tilde{\sigma}_{23}}{d\tau}
=\tilde{F}_1,\label{110}\\
&\tilde{E}_{21}\frac{d\tilde{\sigma}_{12}}{d\tau}-\theta_{12}\tilde{E}_{22}\frac{d\tilde{\sigma}_{23}}{d\tau}
=\tilde{F}_2,\label{111}
\end{align}
where the coefficients $\tilde{E}_{ij}$ and right-hand sides $\tilde{F}_i$ are demonstrated in Attachment, Subsection 6.4.

It is easy to calculate that
\be
\det(\tilde{E}_{ij})=\theta_{12}\mu\Delta,\label{112}
\ee
where
\be
\Delta=\frac73+O(\mu^{(1+3\alpha)/2})+O_\lambda(\mu^{(3+\alpha)/2}+\mu^{2}).\notag
\ee
Thus, we transform the system (\ref{110}), (\ref{111}) to the standard form
\begin{equation}\label{113}
\frac{d\tilde{\sigma}_{12}}{d\tau}=\tilde{M}_{12}(\tau,\sigma_{12},\sigma_{23},\mu)/\Delta,\quad
\frac{d\tilde{\sigma}_{23}}{d\tau}=\tilde{M}_{23}(\tau,\sigma_{12},\sigma_{23},\mu)/\Delta,
\end{equation}
where
\begin{align}
&\tilde{M}_{12}=-\frac{20}{3}\mu z'(\sigma_{23})+O_\lambda(\mu^{(3+\alpha)/2}),
\quad z(\sigma)\ipd\sigma\lambda_{2,1}(\sigma),\label{112a}\\
&\tilde{M}_{23}=-2\mu^{-\alpha}\lambda_{2,1}(\sigma_{23})-\frac{7}{3}z'(\sigma_{12})
+O_\lambda(\mu^{(1-\alpha)/2}+\mu^{\alpha}),\label{112b}
\end{align}
and   the equalities (\ref{3b}), (\ref{70}), (\ref{71}), as well as the functional relation (\ref{101}) and
\be
a_8\lambda_{8,7}(\sigma_{ln})=a_2a_4^4\lambda_{2,1}(\sigma_{ln})-\gamma^3a_{23}\lambda_{4,3(2)}(\sigma_{ln})
+\theta_{ln}\gamma^3a_4a_2'\lambda_{I1}^{(0)}(\sigma_{ln})\label{114}
\ee
have been taken into account.

According to the notation (\ref{89}) and the first assumption of the
form (\ref{24}) we add to (\ref{113}) the "initial" condition:
\begin{equation}\label{114a}
\tilde{\sigma}_{12}\big|_{\tau\to-\infty}\to 0,\quad
\tilde{\sigma}_{23}\big|_{\tau\to-\infty}\to 0.
\end{equation}
Since $\tilde{M}_{ij}$ vanish with an exponential rate as $\tau\to\pm\infty$, it is easy to prove the solvability
of the problem (\ref{113}), (\ref{114a}). Next we note that $\lambda_{2,1}(\sigma_{ln})=\lambda_{2,1}(\dot{\ol{\sigma}}_{ln}\tau+\tilde{\sigma}_{ln})$. Since $\dot{\ol{\sigma}}_{23}=O(\mu^{-3(1+\alpha)/2})$ we find from  (\ref{113}), (\ref{112b}) that $\tilde{\sigma}_{23}(\tau)=O(\mu^{(3+\alpha)/2})$ for sufficiently large $\tau$, however it tends to the limiting value sufficiently slowly, with an exponent $O(\mu^{(3+\alpha)/2})$. Conversely, taking into account that $\dot{\ol{\sigma}}_{12}=O(\mu^{6})$, we obtain that $\tilde{\sigma}_{12}(\tau)=O(1)$ for sufficiently large $\tau$ and tends  to the limit    with an exponent $O(1)$.

The last step of the construction is the return to the phase corrections $\vp_{i1}$. In view of (\ref{91}), (\ref{92}) it is obvious that
the last assumption of the form (\ref{24}) is justified. This implies our main proposition
\begin{teo} \label{teo3}
Under the assumption   (\ref{96}) the asymptotic solution (\ref{38}) describes    $\mod O_{\mathcal{D}'}(\varepsilon^2)$ the KdV-type scenario of the solitary waves interaction.
\end{teo}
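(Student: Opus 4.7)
The plan is to reduce Theorem \ref{teo3} to Theorem \ref{teo1} by exhibiting a solution $(A_i,V_i,S_i,\vp_{i1})$ of the full Hugoniot-type system (\ref{59})-(\ref{67}) that satisfies both the decay hypothesis (\ref{23}) and the two-sided limit condition (\ref{24}) under the scaling (\ref{96}). The algebraic block (\ref{59})-(\ref{61}) is already settled by Lemma \ref{lem3} with the canonical soliton parameters from (\ref{2})-(\ref{3b}), uniformly in $\beta_i>0$. The correctors $S_i$ are furnished by Lemma \ref{lem5}, which produces the unique $\kappa_i$, and hence $S_i=\beta_i\kappa_i/\gamma\beta_3^{1/3}$, with the prescribed behavior at $\tau\to\pm\infty$. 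What then remains is to construct the phase corrections $\vp_{i1}$ and to verify (\ref{24}), since Lemma \ref{lem5} only asserts uniqueness of $\kappa_i$ under the a priori assumption that such $\vp_{i1}$ exist.

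I would do this by working exclusively with the reduced system (\ref{113}) for the unknowns $\tilde{\sigma}_{12},\tilde{\sigma}_{23}$, obtained from (\ref{94})-(\ref{95}) via the decomposition (\ref{89}) and the algebraic elimination (\ref{91})-(\ref{92}). The inversion to normal form is legitimate because the determinant (\ref{112}) stays bounded away from zero: its leading constant $\Delta=7/3+o(1)$ does not degenerate for small $\mu$. The right-hand sides $\tilde{M}_{12}/\Delta$ and $\tilde{M}_{23}/\Delta$ inherit super-polynomial decay in $\tau$ from the Schwartz-class behavior of the convolutions $\lambda_{m,k}(\sigma_{ln})$ defined in (\ref{27}), since $\sigma_{ln}=\ol{\sigma}_{ln}+\tilde{\sigma}_{ln}$ grows linearly in $\tau$ and $\om$ decays exponentially. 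Standard ODE theory then gives a unique global smooth solution of (\ref{113}) with the left-infinity data (\ref{114a}), uniformly bounded in $\mu$.

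Passing to $\tau\to+\infty$, integrability of $\tilde{M}_{ij}/\Delta$ on $\mathbb{R}$ produces finite limits $\tilde{\sigma}_{ij}^{\infty}$. Inserting them into (\ref{91}) together with the integrated form of (\ref{92}) yields $\vp_{i1}(\tau)\to\vp_{i1}^{\infty}=\const_i$, which is exactly the $\tau\to+\infty$ half of (\ref{24}); the $\tau\to-\infty$ half is immediate from (\ref{114a}) and the exponential decay of the right-hand side of (\ref{92}). With (\ref{23}) and (\ref{24}) now both in hand, all hypotheses of Theorem \ref{teo1} are satisfied under (\ref{96}), and Theorem \ref{teo3} follows.

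The main obstacle I anticipate is controlling the quantitative relaxation of $\tilde{\sigma}_{23}$ to its limit: from (\ref{112b}) the forcing contains the term $-2\mu^{-\alpha}\lambda_{2,1}(\sigma_{23})$, which is singularly large in $\mu$, while the natural time scale $\dot{\ol{\sigma}}_{23}=O(\mu^{-3(1+\alpha)/2})$ is much faster, so $\tilde{\sigma}_{23}$ approaches its limit slowly, with an exponent only of order $\mu^{(3+\alpha)/2}$. One must verify that this slow relaxation does not disrupt the Schwartz-class estimates underlying Lemmas \ref{lem1} and \ref{lem2}, in particular the application of (\ref{23e}) to the phases $\vp_i$, so that the weak remainders really stay in $O_{\mathcal{D}'}(\varepsilon^2)$; this is the delicate point where the functional identities (\ref{101}) and (\ref{114}) enter crucially by making the dangerous $\mu^{-\alpha}$-terms cancel against the $7\lambda_{2,1}/10$ contribution.
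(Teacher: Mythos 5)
Your proposal follows essentially the same route as the paper's own argument in Section 3.2: the algebraic block via Lemma \ref{lem3}, the amplitude corrections via Lemma \ref{lem5}, and then the reduction of (\ref{94})--(\ref{95}) to the normal-form system (\ref{113}) with condition (\ref{114a}), whose solvability (non-degenerate determinant $\Delta=7/3+o(1)$, exponentially decaying right-hand sides, finite limits at $\tau\to+\infty$, return to $\vp_{i1}$ through (\ref{91})--(\ref{92})) yields (\ref{24}) and hence the hypotheses of Theorem \ref{teo1}. You also correctly locate the delicate points the paper itself flags --- the slow $O(\mu^{(3+\alpha)/2})$ relaxation of $\tilde{\sigma}_{23}$ and the role of the identities (\ref{101}) and (\ref{114}) in cancelling the singular $\mu^{-\alpha}$ contributions --- so the proposal matches the paper's proof in both structure and level of detail.
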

\section{Conclusion}
We looked for an approach to describe   solitary wave collisions avoiding  the use of explicit multi-soliton formulas. Surprisingly,
we came back to the ancient Whitham's idea to construct asymptotics with the help of conservation laws and a reasonable ansatz,
but in the framework of the weak asymptotics method. In our case  three conservation laws for  three waves have been utilized.
It is clear now how to generalize the approach: for $N$ waves $N$ conservation laws should  be used. On contrary, the existence
of  $N$ conservation laws does not imply the existence of  $N$-soliton type solution since some very astonishing  additional
conditions appear to guarantee both the  solvability of model equations (like (\ref{101})) and the regularity of the solutions (like (\ref{114})).
Furthermore, some questions remain  open, the first of them: how to choose the collection of conservation laws to describe $N$-soliton interaction and is it possible to change conservation laws to reasonable energy relations? At the same time we can formulate the main result of the paper: there is not a sharp frontier between integrable and nonintegrable equations: similar scenarios of the soliton interaction are realized, but with small corrections in the nonintegrable case.
\section{Acknowledgement}
The research was supported by  SEP-CONACYT under grant~178690
(Mexico).

\section{Attachment}
\subsection{Formulas for $(\ve u_x)^2$ and $(\ve^2 u_{xx})^2$}
\begin{align}
&\mathfrak{R}_{(k),2}=\sum_{i=1}^3\beta_i^{2k}K_{i1}^{(2)}+\sum_{l,n}R^{(k)}_{2,ln},\;R^{(k)}_{2,ln}=2\beta_l^{1+k}\beta_n^k K_l^{(1)}K_n^{(1)}\lambda_{I,k}^{(0)}(\sigma_{ln}),\label{1A}\\
&\mathfrak{R}_{(2),2}^{(1)}=\sum_{i=1}^3\beta_i^2\chi_iK_{i1}^{(2)}+\sum_{l,n}(\chi_nR^{(1)}_{2,ln}+C^{(1)}_{2,ln}),\label{2A}\\
&C^{(1)}_{2,ln}=2\beta_l^2K_l^{(1)}K_n^{(1)}\lambda_{I,1}^{(1)}(\sigma_{ln}),\;
\lambda_{I,1}^{(i)}(\sigma_{ln})=\frac{1}{a'_2}\int_{-\infty}^\infty\eta^i\om'(\eta_{ln})\om'(\eta)d\eta,\label{3A}\\
&\lambda_{I,2}(\sigma_{ln})=\frac{1}{a''_2}\int_{-\infty}^\infty\om''(\eta_{ln})\om''(\eta)d\eta,\quad
a''_2=\int_{-\infty}^\infty\Big(\om''(\eta)\Big)^2d\eta,\label{4A}
\end{align}
and $\om'(\eta)=d\om(\eta)/d\eta$, $\om''(\eta)=d^2\om(\eta)/d\eta^2$.
\subsection{Formulas for $\ve^2 u_xu_t$ and $\ve^2u^4 u_{xx}$}
\begin{align}
&\mathfrak{S}=\sum_{i=1}^3\beta_i^2K_{i}^{(2)}\frac{d\vp_{i1}}{d\tau}+\sum_{l,n}\beta_l^2\beta_n K_l^{(1)}K_n^{(1)}\Big(\frac{d\vp_{l1}}{d\tau}+\frac{d\vp_{n1}}{d\tau}\Big)\lambda_{I,1}^{(0)}(\sigma_{ln}),\notag\\
&\mathfrak{M}=\sum_{i=1}^3\beta_i^2V_iK_{i1}^{(2)}+\sum_{l,n}\mathfrak{M}_{ln},\,\mathfrak{M}_{ln}=\beta_l^2\beta_n K_l^{(1)}K_n^{(1)}\big(V_l+V_n\big)\lambda_{I,1}^{(0)}(\sigma_{ln}),\notag\\
&\mathfrak{S}_G=\sum_{l,n}\Big(G_l\frac{dG_n}{d\tau}-G_n\frac{dG_l}{d\tau}\Big)\lambda_{0I}(\sigma_{ln}),\;
\mathfrak{L}=-4\sum_{i=1}^3\beta_i^2K_{i1}^{(5)}+\sum_{l,n}\mathfrak{L}_{ln}+\mathfrak{Q},
\notag\\
&\mathfrak{L}_{ln}=\sum_{j=1}^4C^j_4\beta_l^3 K_l^{(5-j)}K_n^{(j)}\lambda_{4,j(1)}(\sigma_{ln})+\sum_{j=0}^3C^j_4\beta_l\beta_n^2 K_l^{(4-j)}K_n^{(j+1)}\lambda_{4,j(2)}(\sigma_{ln}),\notag\\
&\mathfrak{Q}=\sum_{j=1}^3C^j_4\Big\{\beta_1\beta_2\beta_3^2 K_1^{(4-j)}K_2^{(j)}K_3^{(1)}\lambda_{5,j(1)}\notag\\
&+\beta_1\beta_2^3 K_1^{(4-j)}K_2^{(1)}K_3^{(j)}\lambda_{5,j(2)}
+\beta_1^3\beta_2 K_1^{(1)}K_2^{(4-j)}K_3^{(j)}\lambda_{5,j(3)}\Big\}\notag\\
&+\sum_{j=2}^3\sum_{k=1}^{j-1}C^j_4C^k_j\beta_1\beta_2 K_1^{(4-j)}K_2^{(j-k)}K_3^{(k)}
\sum_{m=1}^{3}\beta_m^3K_m^{(1)}\lambda_{6,jkm},\notag\\
&\lambda_{0I}(\sigma_{ln})=\frac{1}{a'_2}\int_{-\infty}^\infty\om(\eta_{ln})\om'(\eta)d\eta,\quad
a_{23}=\int_{-\infty}^\infty\om^3(\eta)\Big(\om'(\eta)\Big)^2d\eta,\notag\\
&\lambda_{4,j(1)}(\sigma_{ln})=\frac{1}{a_{23}}\int_{-\infty}^\infty\om^{4-j}(\eta_{ln})\om^j(\eta)\om''(\eta_{ln})d\eta,\notag\\
&\lambda_{4,j(2)}(\sigma_{ln})=\frac{1}{a_{23}}\int_{-\infty}^\infty\om^{4-j}(\eta_{ln})\om^j(\eta)\om''(\eta)d\eta,\notag\\
&\lambda_{5,j(1)}=\frac{1}{a_{23}}\int_{-\infty}^\infty\om^{4-j}(\eta_{13})\om^j(\eta_{23})\om''(\eta)d\eta,\notag\\
&\lambda_{5,j(2)})=\frac{1}{a_{23}}\int_{-\infty}^\infty\om^{4-j}(\eta_{13})\om''(\eta_{23})\om^j(\eta)d\eta,\notag\\
&\lambda_{5,j(3)}=\frac{1}{a_{23}}\int_{-\infty}^\infty\om''(\eta_{13})\om^{4-j}(\eta_{23})\om^j(\eta)d\eta,\notag\\
&\lambda_{6,jkm}=\frac{1}{a_{23}}\int_{-\infty}^\infty\om^{4-j}(\eta_{13})\om^{j-k}(\eta_{23})\om^k(\eta)\om''(\eta_{m3})d\eta.\notag
\end{align}
\subsection{Normalization}
\begin{align}
&\ol{\mathfrak{R}}_{(1),2}=\sum_{i=1}^3
\theta_{i3}^{7/3}(\Lambda^{\,2}_i-1)+2\sum_{l,n}\theta_{l3}^{5/3}\theta_{n3}^{2/3}\,\Lambda_l\,\Lambda_n\lambda_{I,1}^{(0)}(\sigma_{ln}),\notag\\
&\mathfrak{K}=\frac{\sigma_{12}}{\beta_1}q_{11}^{(1)}
-\frac{\sigma_{23}}{\beta_2}\big(q_{31}^{(1)}+Q_3\big)+\mathfrak{K}_C,\;q^{(m)}_{ik}=\beta_i^2K_{ik}^{(2)}+(-1)^m\frac43\gamma^3K_{ik}^{(5)},\label{5A}\\
&\mathfrak{K}_C=
\sum_{l,n}\Big(C_{2,ln}^{(1)}-\frac{4}{3}\gamma^3C_{5,ln}\Big)-\frac{4}{3}\gamma^3C_{5,123}, \label{6AA}\\ &Q_3=\sum_{j=1}^2\Big(R_{2,j3}^{(1)}-\frac{4}{3}\gamma^3R_{5,j3}\Big)-\frac{4}{3}\gamma^3R_{5,123},\label{6A}\\
&p_{ik}=K_{ik}^{(2)}-\frac{r_2}{r_1}K_{ik}^{(1)}+R_i^{(k)},\,R_3^{(1)}=\sum_{j=1}^2R_{2,j3},\,R_i^{(k)}=0\;\text{if}\,i\neq3,\,k\neq1,\label{7AA}\\
&e_{i0}=-r_{1}q^{(2)}_{i0}+K_{i0}^{(1)}\sum_{j=1}^3q^{(2)}_{j0}-2r_1(\beta_i^2K_{i1}^{(2)}+\zeta_{13}+\zeta_{i})+2K_{i0}^{(1)}\Psi,\label{8A}\\
&e_{i1}=r_{1}(q_{i1}^{(1)}+Q_i)+K_{i1}^{(1)}\sum_{j=1}^3q^{(2)}_{j0},\quad \Psi=\sum_{i=1}^3\beta_i^2K_{i1}^{(2)}+2\sum_{ln}\zeta_{ln},\label{9A}\\
&\quad \zeta_{ln}=\beta_l^2\beta_nK_{l}^{(1)}K_{n}^{(1)}\lambda_{I1}^{(0)}(\sigma_{ln}),\quad\zeta_{1}=\zeta_{12},\quad \zeta_{3}=\zeta_{23},\quad Q_1=0.\label{11A}
\end{align}
\subsection{Asymptotic analysis}
\begin{align}
&\tilde{E}_{11}=-\ol{r}_2+\ol{r}_1\theta_{13}^{1/3}-\ol{r}_2\mu^{3(1+\alpha)/2}z'(\sigma_{12})
-2\ol{r}_1\mu^{5(1+\alpha)/2}\lambda'_{21}(\sigma_{12})\notag\\
&+2\ol{r}_1\mu^{3+\alpha}\Big(\big(\sigma\lambda_{21}(\sigma+\theta_{12}\sigma_{23})\big)'_\sigma\big|_{\sigma=\sigma_{12}}-\Lambda_{1}z'(\sigma_{13})\Big)+
O_\lambda(\mu^{3+2\alpha}),\notag\\
&\tilde{E}_{12}=\ol{r}_1-\ol{r}_2\theta_{13}^{1/3}-2\ol{r}_1\mu^{3+\alpha}\big(\lambda_{21}(\sigma_{12})z'(\sigma_{23})-\Lambda_{1}z'(\sigma_{13})\big)\notag\\
&+\ol{r}_2\mu^{(7+3\alpha)/2}z'(\sigma_{23})+O_\lambda(\mu^{4}),\quad\ol{r}_1=\sum_{i=1}^3\theta_{1i}^{1/3},\quad \ol{r}_2=\sum_{i=1}^3\theta_{i3}^{1/3}\notag\\
&\tilde{E}_{21}=\frac73+\frac73\mu^{3(1+\alpha)/2}z'(\sigma_{12})-4\mu^{2}\lambda_{21}(\sigma_{23})+O_\lambda(\mu^{3+2\alpha}),\notag\\
&\tilde{E}_{22}=-\frac73\ol{r}_1+\frac73\mu^{(3+\alpha)/2}
+4\mu^{2}\big(\lambda_{21}(\sigma_{23})-\frac53z'(\sigma_{23})\big)+O_\lambda(\mu^{3+\alpha}),\notag\\
&\tilde{F}_1=-\frac67\mu^2\Big\{\lambda_{21}(\sigma_{23})\big(1+\mu^{(1+\alpha)/2}\big)
+\mu^{(3+\alpha)/2}\big(2\lambda_{43}(\sigma_{23})\notag\\
&-\frac{11}{7}\lambda_{21}(\sigma_{23})\big)+O_\lambda(\mu^{2})\Big\},\quad
\tilde{F}_2=2\mu^2\Big\{\lambda_{21}(\sigma_{23})-\frac{56}{3}\mu^{1+\alpha}\big(\lambda_{21}(\sigma_{13})\notag\\
&-\lambda_{21}(\sigma_{12})\lambda_{21}(\sigma_{23})\big)
+\frac{28}{3}\mu^{(3+\alpha)/2}\big(\lambda_{43}(\sigma_{23})-\lambda_{21}(\sigma_{23})\big)+O_\lambda(\mu^{2})\Big\}.\notag
\end{align}

\end{document}